\newtheorem{theorem}{\bf Theorem}
\newtheorem{corollary}[theorem]{\bf Corollary}
\newtheorem{proposition}[theorem]{\bf Proposition}
\newtheorem{definition}[theorem]{\bf Definition}
\newtheorem{remark}[theorem]{\bf Remark}
\numberwithin{equation}{section}
\numberwithin{theorem}{section}
\numberwithin{figure}{section}
\def\hess{\textnormal{Hess}}
\begin{document}
\renewcommand{\thefootnote}{}

\title{Half-space type theorems for a class of weighted minimal surfaces in $\mathbb{R}^{3}$} 
\author{\text{A. L. Mart\'inez-Trivi\~no$^1$, J. P. dos Santos$^2$ and G. Tinaglia$^3$}}
\vspace{.1in}
\date{}
\maketitle

\noindent \noindent  
\noindent $^1$Departamento de Matem\'atica Aplicada, Universidad de C\'ordoba, C\'ordoba, Spain \\
\noindent $^2$Departamento de Matem\'atica, Universidade de Bras\'\i lia, Bras\'\i lia, Brazil \\ 
\noindent $^3$Department of Mathematics, King's College London, London, United Kingdom \\ \\
$\text{almartinez@uco.es}^{1}$, $\text{joaopsantos@unb.br}^{2}$ and $\text{giuseppe.tinaglia@kcl.ac.uk}^{3}$ \\

\begin{abstract}

We establish half-space type results for a class of height-dependent weighted minimal surfaces in $\mathbb{R}^3$, namely critical points of a weighted area functional whose weight depends on the height. When the weight has at most quadratic growth, we prove that there are no proper surfaces contained either in two transverse vertical half-spaces of $\mathbb{R}^3$ or in a half-space determined by a non-vertical plane. We show that this second result holds in a more general context, namely, for a class of stochastically complete weighted minimal surfaces. In this setup, we also prove a result for surfaces contained in regions bounded by cones. Furthermore, for stochastically complete weighted minimal surfaces satisfying restrictions on their principal curvatures, we establish a version of the classic strong half-space result due to Hoffman-Meeks.

\end{abstract}
\vspace{0.2 cm}

\noindent 2020 {\it  Mathematics Subject Classification}: {53C42, 35J60} \\

\noindent {\it Keywords:} half-space theorems, weighted minimal surface, stochastically complete manifolds.
\everymath={\displaystyle}

\section{Introduction}

In this work, we focus on the half-space property for surfaces in $\mathbb{R}^3$ that are critical points, under normal variations with compact support, of the following weighted area functional
\begin{equation}
\label{area}
\mathcal{A}^{\varphi}(\Sigma)=\int_{\Sigma}\, e^{\varphi}\, d\Sigma
\end{equation}
on orientable surfaces $\Sigma$ in a domain $\Omega$ in Euclidean space $(\mathbb{R}^{3}, \langle \, , \, \rangle)$. The function $\varphi\vert_{\Sigma}:\Sigma\rightarrow\mathbb{R}$ is the restriction on $\Sigma$ of a smooth function $\varphi:\Omega\rightarrow\mathbb{R}$. In our case, we will consider functions $\varphi$ that depend only on one direction of $\mathbb{R}^{3}$. Without loss of generality, assume that $\varphi(x,y,z)=\varphi(z)$, i.e., $\varphi$ depends only on the variable $z$. Let $\{\vec{e}_{i}\}_{i=1,2,3}$ be the usual orthonormal frame of $\mathbb{R}^{3}$, then $\varphi\vert_{\Sigma}$ only depends on the height function $\mu:\langle p,\vec{e}_{3}\rangle:\Sigma\rightarrow \mathbb{R}$. Since $\varphi$ will only be considered when restricted to $\Sigma$, throughout this work we write $\varphi\vert_{\Sigma}=\varphi$.
\

The critical points of the area functional \eqref{area} satisfy the Euler-Lagrange equation given by the mean curvature $H$ of $\Sigma$ as follows
\begin{equation}
\label{def}
H=-\langle\overline{\nabla}\varphi, N\rangle=-\dot{\varphi}\eta,
\end{equation}
where $\overline{\nabla}$ denotes the gradient operator of $\mathbb{R}^{3}$, $(^\cdot)$ stands for the usual derivative of $\mathbb{R}$, $N$ is the Gauss map of $\Sigma$ and $\eta=\langle N,\vec{e}_{3}\rangle:\Sigma\rightarrow\mathbb{R}$ is the angle function with respect to $\vec{e}_{3}$. We say that an orientable surface $\Sigma$ in $\Omega$ is $[\varphi,\vec{e}_{3}]$-minimal if and only if its mean curvature $H$ satisfies equation \eqref{def}. 

Indeed, Ilmanen \cite{Ilm94} proved that any $[\varphi,\vec{e}_{3}]$-minimal surface in $\Omega$ is a minimal surface in $(\Omega,\langle\cdot,\cdot\rangle^{\varphi})$, with a conformally equivalent metric $$ \langle\cdot,\cdot\rangle^{\varphi}=e^{\varphi}\langle\cdot,\cdot\rangle.$$
Then, we can apply the theory of minimal surfaces in $3$-Riemannian manifolds for $[\varphi,\vec{e}_{3}]$-minimal surfaces. Unless otherwise stated, the surfaces are assumed to be connected with an empty boundary.\\

The main results of this work are the following.

 {\begin{theorem}[Theorem A]
If $\Sigma$ is a proper $[\varphi,\vec{e}_{3}]$-minimal surface in $\mathbb{R}^{3}$ whose function $\varphi$ is a diffeomorphism having at most quadratic growth and such that $\vert\dot{\varphi}\vert>\xi$, for a real constant $\xi>0$, outside a compact set of $\Sigma$, then $\Sigma$ cannot be contained in half-space $\mathcal{H}^{\varphi}_{\vec{v}}=\{x\in\mathbb{R}^{3}: \text{sgn}(\dot{\varphi})\langle x,\vec{v}\rangle\leq 0\}$ for any vector $\vec{v}\in\mathbb{R}^3$ such that $\langle\vec{v},\vec{e}_{3}\rangle>0$.
\end{theorem}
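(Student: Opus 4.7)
The plan is to argue by contradiction, assuming $\Sigma\subset\mathcal H^{\varphi}_{\vec v}$ and, after possibly replacing $\varphi$ by $-\varphi$, that $\dot\varphi>0$, so that $u:=\langle\cdot,\vec v\rangle$ satisfies $u\le 0$ on $\Sigma$. I would first establish that $\Sigma$ is stochastically complete in the appropriate weighted sense. Using $\Delta^{\Sigma}X=H\,N$ with $H=-\dot\varphi\,\eta$ together with $\nabla^{\Sigma}\varphi=\dot\varphi\,\vec e_3^{\,T}$, a direct computation gives, for $W(p)=|p|^2$,
\[
L^{*}W:=\Delta^{\Sigma}W-\langle\nabla^{\Sigma}\varphi,\nabla^{\Sigma}W\rangle=4-2\dot\varphi\,z.
\]
The at-most-quadratic growth $|\dot\varphi(z)|\le C(1+|z|)$ then yields $L^{*}W\le C(1+W)$ on $\Sigma$; since $W$ is proper on $\Sigma$ (by properness of the immersion), the Khas'minskii criterion gives stochastic completeness of $(\Sigma, e^{-\varphi}\,dA_\Sigma)$, and hence the weak maximum principle at infinity holds for $L^{*}$.

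The heart of the argument is a clean algebraic cancellation. Using $\Delta^{\Sigma}u=-\dot\varphi\,\eta\langle N,\vec v\rangle$ together with the tangential gradient identity $\langle\nabla^{\Sigma}\varphi,\nabla^{\Sigma}u\rangle=\dot\varphi\bigl(v_3-\eta\langle N,\vec v\rangle\bigr)$, the $\eta\langle N,\vec v\rangle$-terms cancel exactly in $L^{*}u$, giving the clean identity
\[
L^{*}u=-\dot\varphi\,v_3 \quad\text{on }\Sigma.
\]
Thus $L^{*}u<0$ on $\Sigma$, and outside a compact subset the hypothesis $|\dot\varphi|>\xi$ strengthens this to $L^{*}u<-\xi v_3<0$. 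The non-verticality condition $v_3>0$ is essential here: it is precisely the coupling between $\vec v$ and the gradient direction of $\varphi$ that produces the sign-definite right-hand side.

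Finally, I would apply the weak maximum principle at infinity to $-u$: along a sequence $(p_n)\subset\Sigma$ escaping every compact subset, one obtains $L^{*}u(p_n)\ge-1/n$, which contradicts $L^{*}u(p_n)<-\xi v_3$ for large $n$. The main obstacle is that this clean application presupposes $-u$ to be bounded above, i.e.\ that $\Sigma$ lies in a slab between two parallel tilted planes. Extending the argument to the full half-space case, when $u$ may be unbounded below, is the technical crux; the expected route is to test the identity $L^{*}u=-\dot\varphi v_3$ against a cutoff $\zeta e^{-\varphi}\,dA_\Sigma$ chosen along level sets of $u$, exploiting the gradient bound $|\nabla^{\Sigma}u|\le|\vec v|$ and the rapid decay of $e^{-\varphi}$ in the $z\to+\infty$ direction (granted by the quadratic growth of $\varphi$) to kill the boundary term while the bulk term $v_3\int\dot\varphi\,\zeta\,e^{-\varphi}\,dA_\Sigma$ remains strictly positive.
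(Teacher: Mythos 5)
Your overall strategy is the one the paper uses: establish $\varphi$-stochastic completeness from the quadratic growth of $\varphi$ (the paper's Corollary~\ref{cor:main}), exploit an algebraic cancellation in the drift Laplacian of the linear function $u=\langle\cdot,\vec v\rangle$, and apply the weak maximum principle at infinity. However, there is a sign problem in your set-up that turns a one-line conclusion into what you perceive as the ``technical crux,'' and your proposed workaround for that crux is not carried out.

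The natural drift Laplacian for a weight $e^{\varphi}\,d\Sigma$ (the weight in the functional~\eqref{area}) is $\Delta^{\varphi}u=\Delta u+\langle\nabla\varphi,\nabla u\rangle$, which is the operator the paper uses throughout; it is self-adjoint with respect to $e^{\varphi}\,d\Sigma$, and it is for this operator that Corollary~\ref{cor:main} furnishes the weak maximum principle. You instead take $L^{*}=\Delta-\langle\nabla\varphi,\nabla\cdot\rangle$, which is the operator for the reciprocal weight $e^{-\varphi}\,d\Sigma$. You also write $\Delta^{\Sigma}u=-\dot\varphi\,\eta\langle N,\vec v\rangle$, whereas with the sign conventions fixed by~\eqref{def} (and used in equations~\eqref{lap}--\eqref{lapmu}) one has $\Delta u=\dot\varphi\,\eta\langle\vec v,N\rangle$. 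These two sign slips are correlated: together they still produce a clean cancellation, but they flip the sign of the result, giving $L^{*}u=-\dot\varphi\,v_3$ instead of the correct identity $\Delta^{\varphi}u=\dot\varphi\,v_3$ from~\eqref{driftmu}. This is decisive. Under the contradiction hypothesis, $u$ is bounded \emph{above} by $0$. With the correct sign one has $\Delta^{\varphi}u=\dot\varphi\,v_3>0$ everywhere and $\ge\xi v_3$ outside a compact set, so the weak maximum principle for $\Delta^{\varphi}$ applied directly to $u$ yields an immediate contradiction (the supremum is not attained, the maximizing sequence diverges, and along it $\Delta^{\varphi}u(p_n)\le 1/n$ while $\Delta^{\varphi}u\ge\xi v_3$). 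There is no slab hypothesis needed and no cutoff/integration-by-parts argument needed; the ``main obstacle'' you flag does not exist once the signs are correct. As written, your $L^{*}u<0$ with $u$ bounded above is perfectly compatible with the maximum principle and gives no contradiction, and the sketched integral argument against $\zeta e^{-\varphi}\,d\Sigma$ is neither worked out nor the route the paper takes.

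One further caution: you should make precise why the maximizing sequence eventually lies outside the compact set where $|\dot\varphi|>\xi$ may fail. The paper observes that $\Delta^{\varphi}u>0$ rules out an interior maximum, so any sequence with $u(p_n)\to\sup u$ must be divergent, and only then invokes the lower bound $\xi$. With the correct sign this step is routine; in your framework the sign flip would also corrupt this argument. To repair the proposal, replace $L^{*}$ by $\Delta^{\varphi}=\Delta+\langle\nabla\varphi,\nabla\cdot\rangle$, redo the computation of $\Delta u$ to match~\eqref{lapmu}, and apply the weak maximum principle to $u$ directly; the rest of your structure (Khas'minskii/quadratic growth giving stochastic completeness, the role of $v_3>0$) is then in line with the paper's proof of Theorem~A via Theorem~C and Corollary~\ref{cor:main}.
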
}

 {\begin{theorem}[Theorem B]
If $\Sigma$ is a  proper $[\varphi,\vec{e}_{3}]$-minimal surface in $\mathbb{R}^{3}$  whose function $\varphi$ has at most quadratic growth, then $\Sigma$ cannot be contained in two transverse vertical half-spaces of $\mathbb{R}^3$.
\end{theorem}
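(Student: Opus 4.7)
The plan is to argue by contradiction. The key input is that \emph{every vertical plane is $[\varphi,\vec{e}_{3}]$-minimal}: its mean curvature vanishes and its unit normal is horizontal, so $\eta\equiv 0$ and the equation $H = -\dot\varphi\,\eta$ is satisfied trivially. Because $\varphi = \varphi(z)$, horizontal translations and rotations about the $\vec{e}_{3}$-axis are isometries of the conformal metric $e^\varphi\langle\cdot,\cdot\rangle$ in which, by Ilmanen's equivalence, $\Sigma$ is a minimal surface; in particular, all horizontal translates of vertical planes are available as barriers. After composing with such a rigid motion, I may assume $\Sigma \subset W := \{x\leq 0\}\cap\{y\leq 0\}$, and after a further horizontal translation that $\sup_\Sigma x = \sup_\Sigma y = 0$.

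The first step is to apply the interior strong maximum principle for minimal surfaces in $(\mathbb{R}^{3},e^\varphi\langle\cdot,\cdot\rangle)$ to rule out tangential contact of $\Sigma$ with either boundary plane $\{x=0\}$ or $\{y=0\}$: otherwise, analytic continuation would force $\Sigma$ to coincide with a vertical plane not contained in $W$. Hence the two suprema are not attained, and properness yields divergent sequences $p_{n},q_{n}\in\Sigma$ with $x(p_{n})\to 0^{-}$, $y(q_{n})\to 0^{-}$ and $|p_{n}|,|q_{n}|\to\infty$. For the sequence $\{p_{n}\}$, translate horizontally to obtain $\Sigma_{n}:=\Sigma-(0,y(p_{n}),0)$, still $[\varphi,\vec{e}_{3}]$-minimal and contained in $\{x\leq 0\}$, with a marked point converging to $\{x=0\}$. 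The at-most quadratic growth of $\varphi$ bounds $\dot\varphi$ linearly in $|z|$ and is used to secure uniform local curvature and area estimates for the $\Sigma_{n}$, so that a subsequence converges smoothly on compact sets to a $[\varphi,\vec{e}_{3}]$-minimal limit $\Sigma_{\infty}\subset\{x\leq 0\}$ touching $\{x=0\}$; the tangency principle then forces $\Sigma_{\infty}=\{x=0\}$. The analogous construction applied to $\{q_{n}\}$ gives a limit equal to the plane $\{y=0\}$. Combining these two asymptotic planar structures with the connectedness and properness of $\Sigma$ yields the contradiction: the two limit planes meet transversally along the vertical edge of $W$, which is incompatible with $\Sigma$ remaining strictly inside $W$ near each sequence.

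The principal obstacle I anticipate is handling the vertical-escape case $|z(p_{n})|\to\infty$ (and similarly for $q_{n}$), because vertical translations are \emph{not} isometries of the conformal metric and the factor $e^{\varphi(z)}$ varies with the height. Here the quadratic growth of $\varphi$ must be exploited quantitatively, either to produce curvature and area estimates for the translates $\Sigma_{n}$ that are uniform in the base height $z(p_{n})$, or---paralleling the paper's stochastically complete treatment of Theorem A---to deduce that $\Sigma$ is stochastically complete for its drift Laplacian and apply a weak maximum principle to a suitably chosen bounded horizontal function on $\Sigma$. Reconciling this non-autonomous behaviour of the weight with the limit and tangency steps is the technical heart of the proof.
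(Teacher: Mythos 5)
Your proposal takes a compactness/blow-up route that is genuinely different from the paper's, and it contains gaps that make it fail as written. You correctly flag the most serious one yourself: the sequence $p_{n}$ with $x(p_{n})\to 0^{-}$ may have $z(p_{n})\to\pm\infty$, and since the weight $e^{\varphi(z)}$ is not translation-invariant in $z$, the translated surfaces $\Sigma_{n}$ are not solutions of a single fixed equation and no uniform curvature or area estimates are available. In fact, at-most-quadratic growth of $\varphi$ makes $\dot\varphi$ at most linear, hence \emph{unbounded}, so $|H|=|\dot\varphi\,\eta|$ is not a priori bounded along the sequence; the sentence ``the at-most quadratic growth of $\varphi$\dots is used to secure uniform local curvature and area estimates'' asserts precisely the estimate that the hypotheses do not give you. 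Beyond this, even in the favourable case where $z(p_{n})$ stays bounded and you obtain a smooth limit $\Sigma_{\infty}=\{x=0\}$, the final step ``combining these two asymptotic planar structures\dots yields the contradiction'' is not an argument: having pieces of $\Sigma$ that are locally close to each of the two planes far out in the wedge does not, by itself, contradict $\Sigma\subset W$. Making such a step rigorous is essentially the content of a half-space theorem, not a consequence of connectedness and properness.

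The paper's proof avoids all of this. Following Chini--M{\o}ller, it introduces the (horizontal) distance $d_{R}$ from a vertical line $\mathcal{L}_{R}$ chosen so that the cylinder $\{d_{R}\le R\}$ is tangent to both boundary planes of the wedge, restricts to the wedge-tip region $\mathcal{V}_{R}$ where $d_{R}$ is bounded, and defines a bounded function $f$ that equals $d_{R}$ on $\Sigma\cap\mathcal{V}_{R}$ and $R$ elsewhere. A direct computation, using $H=-\dot\varphi\,\eta$, gives $\Delta^{\varphi}f=\frac{1-|\nabla f|^{2}+\eta^{2}}{d_{R}}\ge \frac{1-|\nabla f|^{2}}{d_{R}}$, with all the $\dot\varphi$-terms cancelling. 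The quadratic-growth hypothesis then enters only to guarantee the Omori--Yau maximum principle with boundary (Theorem \ref{OYboundary}) on $\Sigma\cap\mathcal{V}_{R}$: this produces a sequence with $f(p_{n})\to\sup f>R$, $|\nabla f(p_{n})|\to 0$ and $\Delta^{\varphi}f(p_{n})\le 1/n$, contradicting the lower bound on $\Delta^{\varphi}f$. In other words, the ``non-autonomous behaviour of the weight'' that you identify as the technical heart is sidestepped entirely: there is no limit, no tangency at infinity, and no blow-up — only the pointwise inequality for $\Delta^{\varphi}f$ and a maximum principle whose validity is exactly what the quadratic growth of $\varphi$ buys you. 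Your second suggested route (using stochastic completeness of $\Delta^{\varphi}$ and a weak maximum principle on a bounded function) is the correct instinct; the missing ingredient is \emph{which} bounded function to use — the distance to a suitably placed vertical line — and the fact that a boundary version of the Omori--Yau principle is needed because $f$ is only nonconstant on a piece of $\Sigma$ with boundary.
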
}

 The intersection of two transverse vertical half-spaces is also known as \textit{a wedge region} of $\mathbb{R}^3$. The conditions on the growth of the function $\varphi$ are natural in the theory of $[\varphi,\vec{e}_{3}]$-minimal surfaces of $\mathbb{R}^3$. For example, under these constraints, see \cite{MM,MMJ,MJ}, one proves the existence of $ [\varphi,\vec{e}_{3}]$-bowls and $[\varphi,\vec{e}_{3}]$-catenary cylinders, as well as their characterization by their asymptotic behaviours. Moreover, under such growth for $\varphi$, in \cite{MMJ} a Spruck-Xiao-type result is shown to assure convexity (i.e., $K\geq 0$, where $K$ is the Gauss curvature) for complete mean-convex $[\varphi,\vec{e}_{3}]$-minimal surfaces. On the other hand, it is established in \cite{MM} that if $\varphi$ has a growth greater than a quadratic polynomial, there exist complete $[\varphi,\vec{e}_{3}]$-bowls which are rotationally symmetric graphs contained in vertical cylinders (see Figure \ref{fig:bowl-cylinder}). Then, up to horizontal translations, Theorem B is sharp with respect to the conditions on the growth of $\varphi$. 

 \begin{figure}[htbp!]
  \centering
  \includegraphics[width=0.5\textwidth]{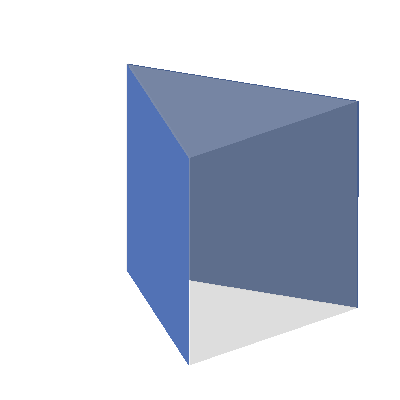}\hfill
  \includegraphics[width=0.4\textwidth]{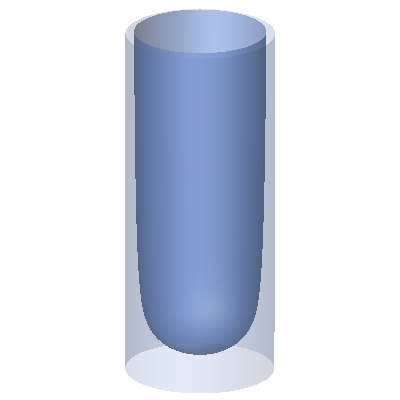}
  \caption{A \emph{wedge region} in $\mathbb{R}^3$ (left). A complete $[\varphi,\vec{e}_{3}]$-bowl contained in a vertical cylinder (right).}
  \label{fig:bowl-cylinder}
\end{figure}

Theorem A holds in a more general context, namely for $\varphi$-stochastically complete $[\varphi,\vec{e}_{3}]$-minimal surfaces. In fact, it can be shown that a proper $[\varphi,\vec{e}_{3}]$-minimal surface in $\mathbb{R}^{3}$, where $\varphi$ has at most quadratic growth is $\varphi$-stochastically complete (see Section \ref{sec-maximum-principle}). In a more general way, we can drop the properness condition for the surface and weaken the conditions on the growth of the $\varphi$ to obtain the following:

\begin{theorem}[Theorem C]
If $\Sigma$ is a $\varphi$-stochastically complete $[\varphi,\vec{e}_{3}]$-minimal surface in $\mathbb{R}^{3}$ whose function $\varphi$ is a diffeomorphism satisfying $\vert\dot{\varphi}\vert\geq\xi>0$, for some $\xi>0$, outside a compact set of $\Sigma$, then $\Sigma$ cannot be contained in half-space $\mathcal{H}^{\varphi}_{\vec{v}}=\{x\in\mathbb{R}^{3}: \text{sgn}(\dot{\varphi})\langle x,\vec{v}\rangle\leq 0\}$ for any vector $\vec{v}\in\mathbb{R}^3$ such that $\langle\vec{v},\vec{e}_{3}\rangle>0$.
\end{theorem}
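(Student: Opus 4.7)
The plan is to derive a contradiction from the assumption $\Sigma\subset\mathcal{H}^{\varphi}_{\vec{v}}$ by applying the weak (Omori--Yau) maximum principle supplied by $\varphi$-stochastic completeness to the restriction of the linear function $h:=\langle\,\cdot\,,\vec{v}\rangle$ to $\Sigma$. Without loss of generality take $\dot\varphi>0$ (the case $\dot\varphi<0$ is symmetric under the reflection $z\mapsto-z$, which conjugates $\varphi$ appropriately); then $\mathcal{H}^\varphi_{\vec v}=\{\langle x,\vec v\rangle\le 0\}$, $h\le 0$ on $\Sigma$, and $h^{\ast}:=\sup_\Sigma h\le 0$ is a finite supremum.

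The first step is to compute the $\varphi$-drift Laplacian $\Delta_\varphi:=\Delta_\Sigma+\langle\nabla^\Sigma\varphi,\nabla^\Sigma\,\cdot\,\rangle$ of $h$. Using $\nabla^\Sigma h=\vec v-\langle\vec v,N\rangle N$, $\nabla^\Sigma\varphi=\dot\varphi(\vec e_3-\eta N)$, and the identity $\Delta_\Sigma\langle x,\vec v\rangle=2H\langle N,\vec v\rangle$ together with the $[\varphi,\vec e_3]$-minimality equation $H=-\dot\varphi\,\eta$ from \eqref{def}, one obtains
\[
\Delta_\varphi h \;=\; \dot\varphi\,\bigl(\langle \vec v,\vec e_3\rangle + \eta\,\langle \vec v,N\rangle\bigr).
\]
Since $h^\ast<\infty$, $\varphi$-stochastic completeness produces a sequence $\{p_k\}\subset\Sigma$ with $h(p_k)\to h^\ast$, $|\nabla^\Sigma h|(p_k)\to 0$, and $\limsup_k\Delta_\varphi h(p_k)\le 0$, the gradient bound being either built into the formulation of $\varphi$-stochastic completeness adopted in Section~\ref{sec-maximum-principle} or else recoverable by applying the weak form to $u=e^{ah}$ for $a>0$ arbitrarily large (using $\Delta_\varphi u=u(a\Delta_\varphi h+a^2|\nabla^\Sigma h|^2)$ and a diagonal subsequence).

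Next, $|\nabla^\Sigma h|^2(p_k)=|\vec v|^2-\langle\vec v,N(p_k)\rangle^2\to 0$ forces $N(p_k)\to\pm\vec v/|\vec v|$ along a subsequence, so $\eta(p_k)\to\pm\langle\vec v,\vec e_3\rangle/|\vec v|$ with the same sign and $\eta(p_k)\,\langle\vec v,N(p_k)\rangle\to+\langle\vec v,\vec e_3\rangle>0$. Substituting into the displayed formula above yields $\Delta_\varphi h(p_k)\to 2\dot\varphi(p_k)\,\langle\vec v,\vec e_3\rangle$. The hypothesis $|\dot\varphi|\ge\xi>0$ outside a compact subset $K\subset\Sigma$, together with the strict positivity of $\dot\varphi$ forced by the diffeomorphism assumption, gives a uniform lower bound $\dot\varphi(p_k)\ge\xi_\ast>0$: either $\{p_k\}$ eventually leaves $K$ and $\xi_\ast=\xi$, or a subsequence remains in $K$ and converges to an interior maximum $p_\infty$ where $\xi_\ast=\dot\varphi(p_\infty)>0$. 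In both cases $\liminf_k\Delta_\varphi h(p_k)\ge 2\xi_\ast\langle\vec v,\vec e_3\rangle>0$, contradicting $\limsup_k\Delta_\varphi h(p_k)\le 0$.

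The main difficulty is the sign analysis of $\Delta_\varphi h$: the crucial point is that the $\eta\langle\vec v,N\rangle$ term must add constructively to $\langle\vec v,\vec e_3\rangle$ at the maximizing sequence, and this is guaranteed exactly by the convention in \eqref{def}; with the opposite sign convention the analogous computation would yield a strictly negative value at the maximum, consistent with the weak maximum principle and producing no contradiction. A secondary obstacle is securing the gradient control $|\nabla^\Sigma h|(p_k)\to 0$, necessary to align $N(p_k)$ with $\pm\vec v/|\vec v|$; the hypothesis $\langle\vec v,\vec e_3\rangle>0$ then enters decisively, since it is exactly this strict positivity that is doubled (not cancelled) in the final limit $2\dot\varphi\langle\vec v,\vec e_3\rangle$.
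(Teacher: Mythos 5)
Your computation of the drift Laplacian of $h=\langle\,\cdot\,,\vec v\rangle$ is incorrect, and this is not a cosmetic slip: the correct answer is a \emph{clean cancellation} that makes the whole proof collapse into two lines. Using \eqref{lap}, $\Delta_\Sigma h=-\langle\vec v,N\rangle\,\mathrm{div}_\Sigma N=\dot\varphi\,\eta\,\langle\vec v,N\rangle$ under the conventions of \eqref{def}, while
$\langle\nabla^\Sigma\varphi,\nabla^\Sigma h\rangle=\dot\varphi\bigl(\langle\vec e_3,\vec v\rangle-\eta\langle\vec v,N\rangle\bigr)$. Adding, the $\eta\langle\vec v,N\rangle$ contributions cancel exactly and one gets
\[
\Delta^\varphi h=\dot\varphi\,\langle\vec v,\vec e_3\rangle=c\,\dot\varphi,
\]
which has a definite sign everywhere on $\Sigma$ (since $\varphi$ is a diffeomorphism, $\dot\varphi$ never vanishes, hence $c\dot\varphi\ge c\xi'>0$ on all of $\Sigma$). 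Your formula $\Delta^\varphi h=\dot\varphi(\langle\vec v,\vec e_3\rangle+\eta\langle\vec v,N\rangle)$ keeps an extra term whose sign is not controlled; tracing back, this comes from inserting $\Delta_\Sigma h=2H\langle N,\vec v\rangle$ with a sign/normalization that is incompatible with the convention $H=-\dot\varphi\eta$ you quote from \eqref{def}. With the corrected formula, the proof ends as soon as you apply the weak maximum principle to $h$: $\Delta^\varphi h(p_n)\le 1/n$ contradicts $\Delta^\varphi h\ge c\xi'>0$, with no gradient information needed at all.

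Because your version of $\Delta^\varphi h$ does not have a sign, you are forced to ask for $|\nabla^\Sigma h|(p_k)\to 0$, but that is not available under the hypothesis. $\varphi$-stochastic completeness (Definition 2.3 in Section~\ref{sec-maximum-principle}) is the \emph{weak} maximum principle and does not provide a gradient estimate; that is the Omori--Yau principle (Definition 2.5), a strictly stronger property. Your proposed workaround via $u=e^{ah}$ and a diagonal sequence does not close this gap in general: from $a u\,(\Delta^\varphi h+a|\nabla h|^2)\le 1/n$ you can only conclude $|\nabla h|^2\to 0$ if $\Delta^\varphi h$ is bounded from below along the sequence, which is not guaranteed when $\dot\varphi$ is unbounded on $\Sigma$; and if the diagonal trick upgraded weak to Omori--Yau unconditionally it would prove a false implication. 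So both the key identity and the mechanism you lean on to repair it need to be fixed; once the cancellation is restored the argument is exactly the one in the paper.
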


 Although there are examples of $[\varphi,\vec{e}_{3}]$-minimal surfaces in $\mathbb{R}^{3}$ contained in half-spaces, as indicated by the grim-reaper cylinder, the hypothesis of stochastic completeness provides an important characterization of such surfaces. This characterization is given in the theorem below:

\begin{theorem}[Theorem D]
\label{verticalcase}
Let $\Sigma$ be a $\varphi$-stochastically complete $[\varphi,\vec{e}_{3}]$-minimal surface that is contained in a half-space of $\mathcal{H}_{\vec{v}}=\{x\in\mathbb{R}^{3}: \langle x,\vec{v}\rangle\leq 0\}$ for any vector $\vec{v}\in\mathbb{R}^3$ such that $\langle\vec{v},\vec{e}_{3}\rangle=0.$ Then, one of the following holds
\begin{enumerate}
\item $\Sigma$ is a plane parallel to $\partial \mathcal{H}_{\vec{v}}$.
\item $\Sigma$ is not a plane parallel to $\partial \mathcal{H}_{\vec{v}}$, but there exists a divergent sequence $\{p_{n}\}_{n\in\mathbb{N}}\subset\Sigma$ such that $\eta(p_{n})\rightarrow 0$ as $n\rightarrow+\infty$.
\end{enumerate}
\end{theorem}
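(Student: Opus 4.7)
The plan is to apply the weak maximum principle at infinity — provided by the $\varphi$-stochastic completeness of $\Sigma$ — to the linear height function in the $\vec{v}$-direction,
$$ u(p) := \langle p, \vec{v}\rangle, \qquad p\in\Sigma, $$
which is bounded above by $0$ since $\Sigma \subset \mathcal{H}_{\vec{v}}$. Setting $u^{*} := \sup_\Sigma u \in (-\infty, 0]$, the two alternatives in the statement will correspond to whether $u^{*}$ is attained on $\Sigma$ (giving the plane) or only approached at infinity (giving the divergent sequence).

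First I would compute $\Delta^{\varphi}_\Sigma u$ on a $[\varphi,\vec{e}_{3}]$-minimal surface. Combining $\overline{\nabla}\varphi = \dot\varphi\,\vec{e}_{3}$, the orthogonality $\vec{v}\perp \vec{e}_{3}$, the equation $H = -\dot\varphi\,\eta$, and the standard identity $\Delta_\Sigma\langle p,\vec{v}\rangle = 2H\langle N,\vec{v}\rangle$, a short calculation gives
$$ \Delta^{\varphi}_\Sigma u \;=\; -3\,\dot\varphi\,\eta\,\langle N,\vec{v}\rangle, \qquad |\nabla^{\Sigma} u|^{2} \;=\; |\vec{v}|^{2} - \langle N,\vec{v}\rangle^{2}. $$
If $u^{*}$ is attained at some interior point $p_{0}\in\Sigma$, then $\Sigma$ lies on one side of the vertical plane $P := \{\langle \cdot,\vec{v}\rangle = u^{*}\}$ and is tangent to $P$ at $p_{0}$. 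Since $P$ is vertical, $\eta_{P}\equiv 0$ and therefore $H_{P}\equiv 0 = -\dot\varphi\cdot 0$, so $P$ is itself $[\varphi,\vec{e}_{3}]$-minimal. The tangency (strong maximum) principle for $[\varphi,\vec{e}_{3}]$-minimal surfaces — which is a consequence of Ilmanen's reformulation as classical minimal surfaces in the conformally changed metric $e^{\varphi}\langle\cdot,\cdot\rangle$ — then forces $\Sigma = P$, a vertical plane parallel to $\partial\mathcal{H}_{\vec{v}}$; this is alternative~(1).

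If $u^{*}$ is not attained, any sequence $\{p_{n}\}\subset\Sigma$ with $u(p_{n})\to u^{*}$ must diverge in $\Sigma$. Testing the weak $\varphi$-maximum principle against $e^{\alpha u}$ — which is bounded above by $1$ since $u\leq 0$ — for an auxiliary parameter $\alpha>0$, and using the identity
$$ \Delta^{\varphi}(e^{\alpha u}) \;=\; \alpha\,e^{\alpha u}\bigl(\Delta^{\varphi} u + \alpha\,|\nabla^{\Sigma} u|^{2}\bigr), $$
one produces, for each fixed $\alpha$, a divergent sequence along which $\Delta^{\varphi} u + \alpha|\nabla^{\Sigma} u|^{2}\to 0$. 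A diagonal extraction letting $\alpha\to\infty$ then forces $|\nabla^{\Sigma} u(p_{n})|\to 0$ along a single divergent sequence, so $\langle N(p_{n}),\vec{v}\rangle^{2}\to |\vec{v}|^{2}$. Hence $N(p_{n})$ approaches the horizontal direction $\pm\vec{v}/|\vec{v}|$, and consequently
$$ \eta(p_{n}) \;=\; \langle N(p_{n}),\vec{e}_{3}\rangle \;\longrightarrow\; 0, $$
which is alternative~(2).

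The main obstacle is the diagonal extraction step. Stochastic completeness yields only a one-sided Laplacian bound, so upgrading it to genuine gradient decay requires simultaneous control of $\Delta^{\varphi} u$ as $\alpha$ grows; this term hides $\dot\varphi(p_{n})$, for which the hypotheses give no a priori bound. Reconciling these — either by exploiting the specific structure of $-3\dot\varphi\,\eta\,\langle N,\vec{v}\rangle$ together with $\langle N,\vec{v}\rangle^{2}\leq |\vec{v}|^{2}$, or by appealing to the sharpest available form of the weak maximum principle for $\varphi$-stochastically complete manifolds, and checking that the selected sequence genuinely diverges — is the technical heart of the argument.
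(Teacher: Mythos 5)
Your overall strategy parallels the paper's: apply the weak maximum principle to a bounded monotone transform of the linear height function $u(p)=\langle p,\vec v\rangle$, extract $|\nabla u|\to 0$ along a maximizing sequence, and conclude $\eta\to 0$ because $\nabla u=\vec v^{\,T}$ and $\vec v\perp\vec e_3$. Your treatment of alternative (1) via tangency with the vertical plane $P$ (which is itself $[\varphi,\vec e_3]$-minimal since $\eta_P\equiv 0$) is also correct. However, there is a computational error that manufactures the ``technical heart'' you worry about. On a $[\varphi,\vec e_3]$-minimal surface with $\langle\vec v,\vec e_3\rangle=0$ one has
$$\Delta^{\varphi}u=\dot\varphi\,\langle\vec v,\vec e_3\rangle=0,$$
not $-3\dot\varphi\,\eta\,\langle N,\vec v\rangle$. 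With the paper's normalization $H=-\dot\varphi\,\eta$ (equation \eqref{def}) the Laplacian is $\Delta u=\dot\varphi\,\eta\,\langle\vec v,N\rangle$ (equation \eqref{lapmu}), while $\langle\nabla\varphi,\nabla u\rangle=\dot\varphi\bigl(\langle\vec e_3,\vec v\rangle-\eta\,\langle N,\vec v\rangle\bigr)=-\dot\varphi\,\eta\,\langle N,\vec v\rangle$, and these cancel exactly. The identity $\Delta_\Sigma\langle p,\vec v\rangle=2H\langle N,\vec v\rangle$ you quote carries a mean-curvature convention (factor of $2$ and sign) incompatible with $H=-\dot\varphi\,\eta$, which is where the spurious term came from.

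Once $\Delta^{\varphi}u=0$ is in hand, your $e^{\alpha u}$ argument closes for a \emph{single fixed} $\alpha>0$, with no diagonalization and no need to control $\dot\varphi$: the identity becomes
$$\Delta^{\varphi}\!\bigl(e^{\alpha u}\bigr)=\alpha^{2}e^{\alpha u}\,|\nabla u|^{2}\ \geq\ 0,$$
and $e^{\alpha u}\leq 1$ since $u\leq 0$. The weak maximum principle then produces $\{p_n\}$ with $u(p_n)\to u^{*}$ (finite) and $\alpha^{2}e^{\alpha u(p_n)}|\nabla u(p_n)|^{2}<1/n$; as $e^{\alpha u(p_n)}\to e^{\alpha u^{*}}>0$, this forces $|\nabla u(p_n)|\to 0$, hence $\langle N(p_n),\vec v\rangle^{2}\to|\vec v|^{2}$ and $\eta(p_n)\to 0$. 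The $\dot\varphi$-term you feared is simply absent. The paper uses the concave transform $\psi=\log(a+\langle p,\vec v\rangle)$, for which $\Delta^{\varphi}\psi=-|\nabla\psi|^{2}$, and applies the weak maximum principle at the infimum; both transforms are instances of the same elementary fact that $\Delta^{\varphi}(g(u))=g''(u)\,|\nabla u|^{2}$ whenever $\Delta^{\varphi}u=0$. So your plan is essentially the paper's, modulo the sign/normalization slip and the unnecessary $\alpha\to\infty$ extraction.
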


As a combination of Theorems C and D, the well-known result of Fischer-Colbrie and Schoen \cite{FCS} for complete stable minimal surfaces in $\mathbb{R}^{3}$ and the study of non-compact properly embedded minimal surfaces in a complete Riemannian manifold with nonnegative Ricci curvature \cite[Theorem 8]{MSY}, we obtain a version of the strong half-space theorem for $[\varphi,\vec{e}_{3}]$-minimal surfaces:

\begin{theorem}[Strong half-space theorem] Let $\Sigma_{1}$ and $\Sigma_{2}$ be complete mean convex properly embedded $[\varphi,\vec{e}_{3}]$-minimal surfaces in $\mathbb{R}^{3}$ and whose function $\varphi$ is a diffeomorphism such that $\vert\dot\varphi\vert>\xi$ for a real constant $\xi>0$ outside a compact set of $\Sigma$. If $\Sigma_{1}$ and $\Sigma_{2}$ are $\varphi$-stochastically complete, then one of the following holds:
\begin{enumerate}
\item  $\Sigma_{1}$ and $\Sigma_{2}$ intersect non-trivially,
\item $\Sigma_1$ and $\Sigma_2$ are contained in distinct half-spaces determined by a vertical plane. Moreover, over each $\Sigma_{i}$, there exists a sequence of points such that one of the principal curvatures converges to zero along the sequence.
\end{enumerate}
\end{theorem}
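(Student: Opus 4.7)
The plan is to split into two cases depending on whether $\Sigma_1\cap\Sigma_2$ is empty. If the intersection is non-empty we are immediately in case~(1), so the substantive task is to show that $\Sigma_1\cap\Sigma_2=\emptyset$ forces case~(2). The overall strategy adapts the classical barrier-with-stable-plane argument of Hoffman and Meeks to the present weighted setting, using Theorems~C and~D in place of the usual weak half-space theorem.

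First I would produce a \emph{Euclidean} minimal plane sandwiched between $\Sigma_1$ and $\Sigma_2$. Since both surfaces are properly embedded, orientable, and mean convex in the Euclidean sense, they bound a mean convex component $W$ of $\mathbb{R}^3\setminus(\Sigma_1\cup\Sigma_2)$, and $(\mathbb{R}^3,\langle\cdot,\cdot\rangle)$ has nonnegative (in fact zero) Ricci curvature. By \cite[Theorem~8]{MSY}, there exists a non-compact, properly embedded stable minimal surface $\Pi\subset\overline{W}$, and by \cite{FCS} $\Pi$ must be a plane. Each $\Sigma_i$ lies in one of the closed half-spaces bounded by $\Pi$. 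If $\Pi$ were not vertical, then a suitable unit normal $\vec{v}$ to $\Pi$ satisfies $\langle\vec{v},\vec{e}_{3}\rangle>0$, and $\Sigma_i$ would sit inside a set of the form $\mathcal{H}^{\varphi}_{\vec{v}}$ (up to a horizontal translation, which leaves both $\varphi$ and the $[\varphi,\vec{e}_{3}]$-minimal equation unchanged). This directly contradicts Theorem~C. Therefore $\Pi$ must be vertical, proving the first assertion of case~(2): $\Sigma_1$ and $\Sigma_2$ lie in distinct half-spaces determined by a vertical plane.

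Next I would apply Theorem~D to each $\Sigma_i$. Since $\Sigma_i$ is $\varphi$-stochastically complete and contained in a vertical half-space, either $\Sigma_i$ is a vertical plane parallel to $\Pi$---in which case both principal curvatures vanish identically, so the second assertion of case~(2) is trivial---or there exists a divergent sequence $\{p_n\}\subset\Sigma_i$ with $\eta(p_n)\to 0$. To upgrade this to the statement that one principal curvature of $\Sigma_i$ tends to zero along such a sequence, I would combine $H=-\dot\varphi\,\eta$ with mean convexity (which, together with the fact that $\dot\varphi$ has constant sign, pins down the sign of $\eta$) and then run a compactness argument: translate $\Sigma_i$ horizontally so that $p_n\to 0$, use that $\Sigma_i$ lies on one side of the vertical plane $\Pi$ as a one-sided barrier to extract a smooth subsequential limit $\Sigma_\infty$ through the base point, and apply the strong maximum principle, comparing $\Sigma_\infty$ with the foliation of $\mathbb{R}^3$ by horizontal translates of $\Pi$, to conclude that one principal curvature of $\Sigma_\infty$ vanishes at the base point. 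By smooth convergence, this forces a principal curvature of $\Sigma_i$ at $p_n$ to tend to zero.

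The main obstacle I expect is this last compactness step: converting $\eta(p_n)\to 0$ into a principal-curvature statement requires local curvature control along $\{p_n\}$ so that a smooth subsequential limit exists, and the only obvious tool for this is the one-sided barrier provided by $\Pi$ together with mean convexity. A secondary delicate point is the correct setup of the region $W$ to which \cite[Theorem~8]{MSY} is applied---it must be non-compact and have genuinely mean convex boundary so that the extracted stable minimal surface $\Pi$ is itself non-compact and distinct from $\Sigma_1$ and $\Sigma_2$.
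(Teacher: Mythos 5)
Your structure through the first two steps matches the paper's proof exactly: apply \cite[Theorem 8]{MSY} to the mean convex region between $\Sigma_1$ and $\Sigma_2$ to extract a properly embedded stable minimal surface, which is a plane $P$ by \cite{FCS}; then invoke Theorem~C to rule out a non-vertical $P$, so $P$ is vertical and $\Sigma_1,\Sigma_2$ lie in opposite closed half-spaces; and then apply Theorem~D (the paper says ``Theorem~E'' here, but that is a typo --- it means Theorem~D) to each $\Sigma_i$ to get a divergent sequence with $\eta\to 0$. Up to this point you and the paper agree.

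Where you diverge --- and where you have a genuine gap --- is the final upgrade from $\eta(p_n)\to 0$ to a principal-curvature statement. The paper's route is elementary: mean convexity plus the constant sign of $\dot\varphi$ forces $\eta$ to have constant sign, so $0$ is either $\sup_{\Sigma_i}\eta$ or $\inf_{\Sigma_i}\eta$; then Proposition \ref{ekeland} (Ekeland's variational principle, which only needs completeness of $\Sigma_i$, given as a hypothesis) produces a nearby sequence $\{q_n\}$ along which, in addition, $|\nabla\eta(q_n)|\to 0$. Since $\nabla\eta=-\mathbf{S}\,\vec{e}_3^{\,T}$ and $|\vec{e}_3^{\,T}(q_n)|^2=1-\eta(q_n)^2\to 1$, one has $|\mathbf{S}u_n|\to 0$ for unit vectors $u_n$; after passing to a subsequence and decomposing $u_n$ in a principal basis, at least one principal curvature $k_i(q_n)\to 0$. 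This is precisely the argument of Corollary \ref{corEk}, which the paper cites. Your proposed replacement --- horizontal translation, extracting a smooth subsequential limit via a one-sided barrier, and a strong maximum principle comparison with the foliation by vertical planes --- is substantially harder and, as you yourself flag, is not actually available from the given hypotheses: a one-sided barrier does not by itself yield the uniform second-fundamental-form bounds needed for a smooth subsequential limit of $[\varphi,\vec{e}_3]$-minimal surfaces, and no curvature estimate of that type is established (or assumed) in this setting. Moreover, even granting a smooth limit $\Sigma_\infty$ with $\eta_\infty(0)=0$, the tangency-principle comparison would require knowing $\Sigma_\infty$ actually touches a vertical plane from one side at the base point, which you would still have to arrange. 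The Ekeland step is the missing idea: it sidesteps all the compactness machinery and converts the angle-function degeneration directly into a gradient bound, which is exactly a shape-operator bound.
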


 Finally, we establish Theorem E, which states the non-existence in a region bounded by a right circular cone (Figure \ref{fig:cone}). Theorem E extends for $[\varphi,\vec{e}_3]$-minimal surfaces the Kim-Pyo result \cite{KP}. Furthermore, the proof presented here provides a simplified proof of their result.

\begin{theorem}[Theorem E]
There are no $\varphi$-stochastically complete $[\varphi,\vec{e}_{3}]$-minimal surfaces  in a right circular cone $$C_{\vec{e}_{3},a} = \left\{ x \in \mathbb{R}^3 | \langle \dfrac{x}{||x||}, \vec{e}_{3} \rangle \leq a <1 \right\},$$ 
$a>0$, whose function $\varphi$ verifies that $0<\xi<\dot{\varphi}$ on $\Sigma$, such that $\xi>\frac{2a}{1-a^2}.$
\end{theorem}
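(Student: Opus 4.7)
I propose to apply the weak maximum principle guaranteed by $\varphi$-stochastic completeness to a test function that encodes the cone's defining inequality, and to obtain a contradiction from a sharp pointwise lower bound on the drift Laplacian in which the hypothesis $\xi > 2a/(1-a^2)$ enters essentially. The natural candidate is $u(x) = \langle x, \vec{e}_3\rangle - a\|x\|$, which is non-positive on $\Sigma \subset C_{\vec{e}_3,a}$ and hence bounded above.

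The first step is to compute $\Delta_\varphi u$. Using $H = -\dot\varphi\,\eta$ together with the standard identities
\[
\Delta_\varphi \mu = \dot\varphi(1 - 3\eta^2), \qquad \Delta_\varphi \|x\| = \frac{1+p^2}{\|x\|} + \dot\varphi(v - 3\eta p),
\]
where $\mu = \langle x, \vec{e}_3\rangle$, $v = \mu/\|x\|$, and $p = \langle x, N\rangle/\|x\|$, I find
\[
\Delta_\varphi u \;=\; \dot\varphi\bigl[\,1 - 3\eta^2 - av + 3a\,\eta p\,\bigr] \;-\; \frac{a(1+p^2)}{\|x\|}.
\]
The key analytic step is then to produce a uniform lower bound $\Delta_\varphi u \geq c > 0$ outside a compact subset of $\Sigma$. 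Expanding $\eta = v p + \sqrt{1-v^2}\, n_e$ in the orthonormal frame $(\hat x, \tilde e, \hat\phi)$ adapted to the radial direction, and using $v \leq a$ (the cone condition) with $p^2 + n_e^2 \leq 1$, the bracket becomes a quadratic form in $(p,n_e)$ parameterized by $v$. Completing the square against the indefinite cross-term $3a\eta p$ and calibrating via $\dot\varphi > \xi$, the threshold $\xi(1 - a^2) > 2a$ is exactly what is required so that, in the worst case of this optimization (attained asymptotically when $\Sigma$ approaches the cone boundary with $N$ aligned with the cone's own normal), the bracket is strictly positive; the residual term $a(1 + p^2)/\|x\|$ is negligible for $\|x\|$ sufficiently large.

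With such a lower bound in place, $\varphi$-stochastic completeness will produce a sequence $\{p_n\}\subset \Sigma$ with $u(p_n) \to \sup_\Sigma u$ and $\limsup_n \Delta_\varphi u(p_n) \leq 0$, contradicting the bound $\Delta_\varphi u \geq c > 0$. If $\{p_n\}$ remains bounded in $\mathbb{R}^3$, compactness produces a genuine maximum point and the classical strong maximum principle closes the argument; otherwise $\|x(p_n)\|\to\infty$, so the lower-order term in $\Delta_\varphi u$ disappears and the conclusion is immediate. The main obstacle is the sharpness of the pointwise estimate in the second step: the bracket $1 - 3\eta^2 - av + 3a\eta p$ is indefinite and can be significantly negative for generic $(\eta,p,v)$, so the estimate must be performed precisely in the critical regime $v\to a$, and the hypothesis $\xi > 2a/(1-a^2)$ must be used in a surgical way. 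If the direct function $u$ itself does not suffice to close the estimate, a modification of the form $-e^{\lambda u}$ with a suitable $\lambda = \lambda(\xi,a) > 0$ gives an extra quadratic term $\lambda|\nabla u|^2$ that provides precisely the flexibility needed to absorb the cross-term and make $2a/(1-a^2)$ appear as the sharp threshold.
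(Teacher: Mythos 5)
Your choice of test function $u(x) = \mu(x) - a\|x\|$ is exactly the one the paper uses, and the overall strategy (compute $\Delta^\varphi u$, show it is eventually positive, contradict the weak maximum principle) is also the same. However, the execution has genuine problems.

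First, the drift-Laplacian formulas you state are incorrect. With $\mu=\langle x,\vec e_3\rangle$ and the paper's sign conventions, one has $\Delta\mu=\dot\varphi\,\eta^2$ and $\langle\nabla\varphi,\nabla\mu\rangle=\dot\varphi(1-\eta^2)$, hence $\Delta^\varphi\mu=\dot\varphi$, not $\dot\varphi(1-3\eta^2)$. Similarly, setting $v=\mu/\|x\|$ and $p=\langle x,N\rangle/\|x\|$, one computes $\Delta\|x\|=\frac{1+p^2}{\|x\|}+\dot\varphi\,\eta p$ and $\langle\nabla\varphi,\nabla\|x\|\rangle=\dot\varphi(v-\eta p)$, so $\Delta^\varphi\|x\|=\frac{1+p^2}{\|x\|}+\dot\varphi\, v$: the cross-term $\eta p$ cancels identically. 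Consequently
$\Delta^\varphi u=\dot\varphi(1-av)-\frac{a(1+p^2)}{\|x\|}\geq \dot\varphi(1-a^2)-\frac{2a}{\|x\|}$
using $v\leq a$ (which is exactly $u\leq 0$) and $p^2\leq 1$. There is no ``indefinite bracket'' $1-3\eta^2-av+3a\eta p$ to optimize; the difficulty you anticipate is an artifact of the algebraic errors, and the exponential modification $-e^{\lambda u}$ is not needed.

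Second, and more importantly, your proposal misidentifies where the real obstruction lies. The correct lower bound $\dot\varphi(1-a^2)-2a/\|x\|$ is \emph{not} uniformly positive: it can be negative when $\|x\|$ is small, i.e., near the cone vertex, and this is precisely where the hypothesis $\xi>2a/(1-a^2)$ must be exploited (it makes the bound positive once $\|x\|>1$). Your suggestion that ``if $\{p_n\}$ remains bounded, compactness produces a genuine maximum point'' does not close this gap: $\Sigma$ is not assumed proper or complete, so a sequence that diverges in $\Sigma$ while staying bounded in $\mathbb{R}^3$ need not have a limit point on $\Sigma$, and the strong maximum principle cannot be invoked. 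The paper handles this by splitting according to the sign and limiting behavior of $\mu(p_n)$, and — crucially — by a vertical translation lemma showing that $\Sigma-\lambda\vec e_3$ is $[\varphi_\lambda,\vec e_3]$-minimal, which lets one normalize the accumulation height so that $\|x(p_n)\|>1$ eventually and the bound becomes uniformly positive along the sequence. Your argument is missing this translation/case-analysis step entirely, and without it the proof does not go through.
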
 
\begin{figure}[htbp!]
\begin{center}
\includegraphics[width=0.7\linewidth]{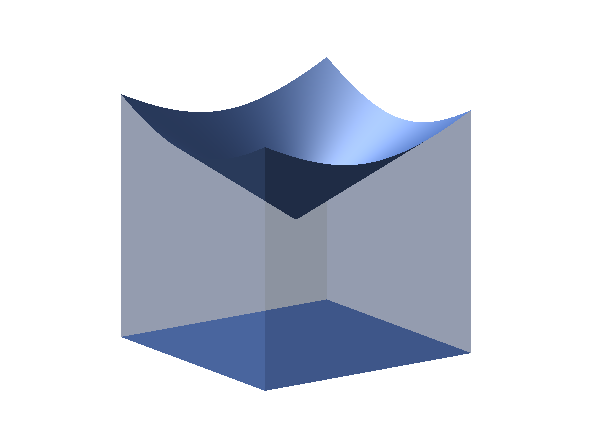}
\end{center}
\caption{Region bounded by a cone} \label{fig:cone}
\end{figure}

 The current interest in developing the theory of $[\varphi,\vec{e}_{3}]$-minimal surfaces concerns the numerous and successful advances of some particular interesting cases: 
\begin{itemize}
\item \textbf{Translators of the mean curvature flow.} When $\varphi=\text{Id}$ and $\Omega=\mathbb{R}^3$, $\Sigma$ is the initial condition of a translating soliton to the mean curvature flow, i.e., a solution to the mean curvature flow given by vertical translations $t\rightarrow\Sigma+t\vec{e}_{3}$. Clutterbuck, Schn\"urer and Schulze \cite{CSS} describe the asymptotic behaviour of rotationally symmetric examples of paraboloid type called bowl translating soliton and catenoid type called winglike. In fact, from the works of Spruck and Xiao \cite{SX}, and Wang \cite{Wang}, any complete mean convex translating soliton is convex. Therefore, the bowl translating soliton is the only entire vertical graph translating soliton. Moreover, the convexity result given in \cite{SX} was extended by Bourni, Langford and Tinaglia \cite{TLL1} to higher dimensions under rotational symmetry. Finally, in $\mathbb{R}^3$, all complete graphs have been classified thanks to the work of Hoffman, Mart\'in, Ilmanen and White \cite{HIMW} and of Wang~\cite{Wang}.

\item \textbf{Singular $\alpha$-minimal surfaces} When $\varphi(\mu)=\alpha\log(\mu)$, with $\alpha$ a real constant and $\Omega = \mathbb{R}^3_{+} = \left\{ (x,y,z) \in \mathbb{R}^3 \, , \, z>0 \right\}$, $\Sigma$ is called singular $\alpha$-minimal surface. This case has important applications in physics. For example, for $\alpha=1$, $\Sigma$ is a heavy surface under a gravitational field, which, according to the work of L\'opez \cite{RL1} and Poisson \cite{Poisson}, is of importance for the construction of perfect domes. For $\alpha=2$, we recover the classical minimal surfaces in hyperbolic space $\mathbb{H}^{3}$ in the half-space model. Moreover, the study of the global geometry of these examples was given by L\'opez \cite{RL} and Dierkes \cite{D}, whose main properties depend on the sign of $\alpha$. In fact, the geometric properties of this class of surfaces will depend on the analytical properties of the function $\varphi$.

\item \textbf{Conformal solitons to the mean curvature flow in $\mathbb{H}^3$.} When $\varphi(\mu) = 2 \log(\mu) - 1 / \mu$, $\Sigma \subset \mathbb{R}^3_+$ provides a conformal soliton to the mean curvature flow when seen as a surface in the half-space model of $\mathbb{H}^3$, with respect to the conformal vector field $-\vec{e}_3$. Such solitons were considered recently by Mari, Oliveira, Savas-Halilaj and Sodr\'e de Sena \cite{mari}, where they classified cylindrical and rotationally symmetric examples.
\end{itemize}

We recommend to the reader the works \cite{MM,MM1,MM2,MM3,MMJ,MJ} for more details on the recent advances in the theory of $[\varphi,\vec{e}_{3}]$-minimal surfaces. 

The half-space property for translating solitons is false due to the existence of a properly embedded non-flat translator in $\mathbb{R}^{3}$. For example, the  {well-known} grim reaper cylinder (Figure \ref{fig:grim-reaper}) and the $\Delta$-wing solitons  {are contained in half-spaces of $\mathbb{R}^3$}. The existence of translating solitons in slab regions was established in \cite{TLL1, TLL2, HIMW, SX, Wang}. Recently, Hoffman, Mart\'in and White \cite{HMW1} proved the existence of complete annuloid translators contained in the intersection of half-spaces determined by two parallel planes.

On the other hand, Shahriyari \cite{Sh} showed that a translator given as a complete graph cannot be defined in a cylindrical domain. Chini and Moller \cite{CM} proved that there are no properly immersed translators inside the intersection domain of two transverse half-spaces parallel to $\vec{e}_{3}$. Finally, Kim and Pyo \cite{KP} proved that there are no complete translators for the mean curvature flow in a closed half-space $\mathcal{H}_{\vec{v}}=\{x\in\mathbb{R}^{3}\, : \, \langle x,\vec{v}\rangle\leq 0\}$ with $\langle\vec{v},\vec{e}_{3}\rangle>0$. Moreover, they also proved that there are no complete translators in a right circular cone $C_{\vec{v},a}=\{x\in\mathbb{R}^{3} \, : \, \langle x,\vec{v}\rangle\leq a \vert x\vert<\vert x\vert\}$. 

\begin{figure}[htbp!]
\begin{center}
\includegraphics[width=0.5\linewidth]{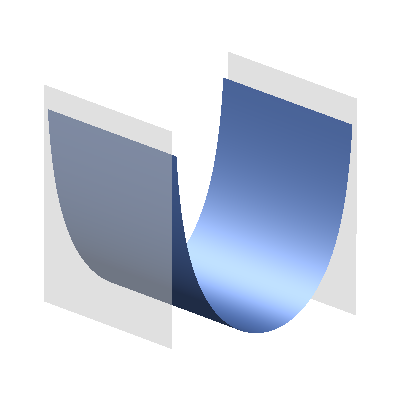}
\end{center}
\caption{The grim reaper cylinder and its asymptotic behaviour between two planes} \label{fig:grim-reaper}
\end{figure}

Since any translator is minimal in its corresponding Ilmanen space, one can consider the strategy of Hoffman and Meeks \cite{HM}, collapsing the neck of catenoids and then applying the maximum principle of tangency. However, the behaviour of catenoid-type translators is different. If we consider the family of winglike collapsing necks, these converge to a double cover of the bowl translator.

For this reason, the arguments must be different. We will use a more general maximum principle called the weak maximum principle for $\varphi$-stochastically complete surfaces together with a generalized Omori Yau maximum principle, see \cite{AMR}, for the Schr\"odinger operator drift Laplacian $\Delta^{\varphi}(\cdot)=\Delta (\cdot)+\langle\nabla\varphi,\nabla (\cdot)\rangle$, where $\nabla$ is the induced gradient on $\Sigma$. We would like to point out that the Omori-Yau maximum principle for $\Delta^{\varphi}$ holds for translators in $\mathbb{R}^{3}$ that are proper. This technique was used, for example, by Impera and Rimoldi \cite{IR} to obtain half-space type results for $\varphi$-stochastically complete translators. 
\\

The paper is organized as follows: Section 2 is an introduction to the weak maximum principle for stochastically complete surfaces. Moreover, we give conditions over the mean curvature where the Omori-Yau maximum principle holds for the drift Laplacian $\Delta^\varphi$, and we establish sufficient conditions to guarantee that the Omori-Yau maximum principle holds on complete $[\varphi,\vec{e}_3]$-minimal surfaces with boundary. In Sections 3 and 5, we prove Theorems A and C and Theorem D, respectively. Precisely, the Half-space properties for $\varphi$-stochastically complete $[\varphi,\vec{e}_{3}]$-minimal surfaces in $\mathbb{R}^3$. As a consequence of Theorems C and D, we have a strong half-space type result for this family of surfaces. In Section 4, we show Theorem B, where we obtain the bi-halfspace theorem determined by wedges of $\mathbb{R}^3$.
Finally, in Section 6, we present a non-existence result for bounded regions by a right circular cone, proving Theorem E, which extends the Kim-Pyo result \cite{KP} with a simplified proof.

\section{Brief introduction to weak maximum principle} \label{sec-maximum-principle}
All details of the following definitions and results can be found in \cite{AMR, PRS}.

\begin{definition}{\rm 
 A Riemannian manifold $(\Sigma,\langle\cdot,\cdot\rangle)$ is said to be \textit{stochastically complete} if the \textit{weak maximum principle} holds for the Laplacian operator $\Delta$, that is, for every $u\in C^2(\Sigma)$ with $u^{*}=\sup_{\Sigma}\, u<+\infty$ and any $\rho<u^{*}$, then $$\inf_{\Sigma_{\rho}}\Delta u\leq 0, \ \ \text{where} \ \ \Sigma_{\rho}=\{x\in\Sigma: u(x)>\rho\}. $$
In other words, for any function $u\in C^{2}(\Sigma)$ with $u^{*}=\sup_{\Sigma}u<+\infty$, there exists a sequence of points $\{p_{n}\}_{n\in\mathbb{N}}\subset\Sigma$ satisfying
$$u(p_{n})>u^{*}-\frac{1}{n}, \ \ \text{and} \ \ \Delta u(p_{n})<\frac{1}{n}, \text{ for any } n\in\mathbb{N}.$$
} \label{def:sthocastically-complete}
\end{definition}
It is important to note that, as pointed out in \cite{AMR}, the Riemannian manifold $\Sigma$ is not assumed to be geodesically complete (see the observation just after \cite[Definition 2.2]{AMR}). 

In this paper, we shall apply the weak maximum principle for the drift Laplacian operator $$\Delta^{f}(\cdot)=\Delta(\cdot)+\langle\nabla f,\nabla (\cdot)\rangle,$$
for some $f\in C^{2}(\Sigma)$, where $\nabla$ denotes the gradient operator in $\Sigma$. 
\begin{definition}{\rm 
Let $(\Sigma,\langle\cdot,\cdot\rangle)$ be a Riemannian manifold and consider $f\in C^{2}(\Sigma)$. We say that $\Sigma$ is $f$-\textit{stochastically complete} if the weak maximum principle holds for $\Delta^{f}$, that is,  if for any function $u\in C^{2}(\Sigma)$ with $u^{*}=\sup_{\Sigma}u<+\infty$, there exists a sequence of points $\{p_{n}\}_{n\in\mathbb{N}}\subset\Sigma$ satisfying
$$u(p_{n})>u^{*}-\frac{1}{n}, \ \ \text{and} \ \ \Delta^{f} u(p_{n})<\frac{1}{n}, \text{ for any } n\in\mathbb{N}.$$}
\end{definition}
Moreover, from \cite[Theorem 2.14]{AMR}, the following characterization holds,
\begin{proposition}
The following statements are equivalent
\begin{itemize}
\item $\Sigma$ is $f$-stochastically complete.
\item For every $\lambda>0$, the only non-negative bounded $C^{2}$ solution of $\Delta^{f} u\geq\lambda u$ on $\Sigma$ is the constant zero.
\item For every $\lambda>0$, the only nonnegative bounded $C^{2}$ solution of $\Delta^{f} u=\lambda u$ on $\Sigma$ is the constant zero.
\end{itemize}
\end{proposition}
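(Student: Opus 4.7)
The plan is to establish the equivalences cyclically as $(1)\Rightarrow(2)\Rightarrow(3)\Rightarrow(1)$. The first two links are essentially formal and follow directly from the definition of the weak maximum principle; the real content lies in the last implication, where one must manufacture bounded eigenfunctions out of the failure of stochastic completeness.

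For $(1)\Rightarrow(2)$, let $u\in C^{2}(\Sigma)$ be nonnegative and bounded with $\Delta^{f}u\geq \lambda u$. Since $u^{*}=\sup_{\Sigma}u<\infty$, $f$-stochastic completeness produces a sequence $\{p_{n}\}\subset\Sigma$ with $u(p_{n})>u^{*}-1/n$ and $\Delta^{f}u(p_{n})<1/n$. Combined with the differential inequality, these give $\lambda u(p_{n})\leq\Delta^{f}u(p_{n})<1/n$, so passing to the limit forces $\lambda u^{*}\leq 0$; since $\lambda>0$ and $u\geq 0$, one concludes $u\equiv 0$. The implication $(2)\Rightarrow(3)$ is immediate because any nonnegative solution of $\Delta^{f}u=\lambda u$ automatically satisfies the inequality $\Delta^{f}u\geq \lambda u$.

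The delicate implication is $(3)\Rightarrow(1)$, which I would prove by contrapositive. If $\Sigma$ fails to be $f$-stochastically complete, there exist $u\in C^{2}(\Sigma)$ with $u^{*}<\infty$ and some $\rho<u^{*}$ such that $\inf_{\Sigma_{\rho}}\Delta^{f}u>0$. I would fix $\lambda>0$, take a smooth relatively compact exhaustion $\{\Omega_{n}\}$ of $\Sigma$, and solve the Dirichlet problem
\begin{equation*}
\Delta^{f}v_{n}=\lambda v_{n} \ \text{ in }\ \Omega_{n}, \qquad v_{n}=1 \ \text{ on }\ \partial\Omega_{n}.
\end{equation*}
Existence is standard for the elliptic operator $\Delta^{f}-\lambda$ on compact domains, and the classical maximum principle (the zeroth-order coefficient being of the favourable sign) yields $0\leq v_{n}\leq 1$. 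Monotonicity with respect to Dirichlet data and domains gives $v_{n+1}\leq v_{n}$ on $\Omega_{n}$, so by elliptic regularity the sequence converges locally smoothly to some $v\in C^{2}(\Sigma)$ with $0\leq v\leq 1$ solving $\Delta^{f}v=\lambda v$ globally on $\Sigma$.

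The main obstacle is then to verify that $v\not\equiv 0$, and this is the step where the assumed failure of stochastic completeness must actually be used. My plan would be to construct, from a suitable affine renormalisation and truncation of $(u-\rho)_{+}$, a compactly bounded subsolution $w$ of $\Delta^{f}w\geq\lambda w$ with $w\leq 1$ on some $\Omega_{n_{0}}$ and $w>0$ at some base point $p_{0}$. Comparing $w$ and $v_{n}$ via the maximum principle on $\Omega_{n}$ for $n\geq n_{0}$ would then furnish a uniform positive lower bound $v_{n}(p_{0})\geq w(p_{0})>0$ that survives the limit, producing a nontrivial bounded nonnegative solution of $\Delta^{f}v=\lambda v$ in contradiction with $(3)$. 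An alternative route, should the barrier construction prove awkward, is to translate the whole argument into the language of the minimal heat kernel of the weighted manifold $(\Sigma,e^{f}d\Sigma)$: the absence of stochastic completeness is precisely loss of heat mass, and a Laplace transform in time of the defect $1-\int_{\Sigma}p^{f}_{t}(x,\cdot)\,e^{f}d\Sigma$ yields the required bounded $\lambda$-eigenfunction.
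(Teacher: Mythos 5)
The paper does not prove this proposition at all; it is quoted verbatim as \cite[Theorem~2.14]{AMR}, so there is no ``paper's proof'' to compare against. Your sketch reconstructs the standard argument of Pigola--Rigoli--Setti and Al\'ias--Mastrolia--Rigoli, and the structure is sound. The implications $(1)\Rightarrow(2)\Rightarrow(3)$ are complete and correct. In $(3)\Rightarrow(1)$ the framework is right --- argue by contrapositive, solve $\Delta^{f}v_{n}=\lambda v_{n}$, $v_{n}=1$ on $\partial\Omega_{n}$, use the sign $-\lambda<0$ of the zeroth-order coefficient to get $0\le v_{n}\le1$ and $v_{n+1}\le v_{n}$, then pass to the monotone limit $v$ --- but the step you explicitly leave as a ``plan'', exhibiting the barrier that forces $v\not\equiv0$, is exactly where the content is, and as written the proof is not finished.

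The missing step can be filled as follows. From $\neg(1)$ there exist $u\in C^{2}(\Sigma)$ with $u^{*}<\infty$ and $\rho<u^{*}$ such that $c_{0}:=\inf_{\Sigma_{\rho}}\Delta^{f}u>0$; note $u^{*}$ cannot be attained (an interior maximum would give $\Delta^{f}u\le0$ there, contradicting $c_{0}>0$). Given $\lambda>0$, pick $\rho'\in\bigl[\max\{\rho,\,u^{*}-c_{0}/\lambda\},\,u^{*}\bigr)$ and set
\[
w=\frac{(u-\rho')_{+}}{\,u^{*}-\rho'\,}.
\]
Then $0\le w\le1$, $w>0$ on the nonempty open set $\Sigma_{\rho'}$, and on $\Sigma_{\rho'}$ one has
$\Delta^{f}w=\Delta^{f}u/(u^{*}-\rho')\ge c_{0}/(u^{*}-\rho')\ge\lambda\ge\lambda w$,
so $w$ is a weak (Lipschitz, hence viscosity) subsolution of $\Delta^{f}-\lambda$ on all of $\Sigma$, being the maximum of the subsolutions $0$ and $(u-\rho')/(u^{*}-\rho')$. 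Comparison on $\Omega_{n}$ (with $w\le1=v_{n}$ on $\partial\Omega_{n}$) gives $v_{n}\ge w$, hence $v\ge w>0$ somewhere. The affine rescaling is not cosmetic: without dividing by $u^{*}-\rho'$ the barrier need not lie below the boundary datum $1$ and the comparison fails, and without pushing $\rho'$ up towards $u^{*}$ the inequality $\Delta^{f}w\ge\lambda w$ is not available for the given $\lambda$. Finally, the heat-kernel detour you mention in closing is not self-contained here: the paper's definition of $f$-stochastic completeness is the analytic weak maximum principle, and identifying that with conservation of mass for the weighted heat semigroup is itself one of the nontrivial equivalences in \cite{AMR}, so invoking it would be circular as a route to $(3)\Rightarrow(1)$.
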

Notice that, in general, it is difficult to check when a Riemannian manifold verifies the $f$-weak maximum principle. To end this part, we give a criterion for verifying this property when $u^{*}>0$, together with an explicit example. From Theorem \cite[Theorem 2.14]{AMR}, we can adapt the arguments of \cite[Theorem 2.9]{AMR} to prove the following result,
\begin{theorem}
\label{criteria}
Let  $\Sigma$ be a Riemannian manifold and $f\in C^{2}(\Sigma)$. If there exists a function $\gamma\in C^{\infty}(\Sigma)$ such that $\gamma(p)\rightarrow +\infty$ as $p\rightarrow \infty$ and $\Delta^{f}\gamma\leq\, \lambda\gamma$ outside a compact set, for some $\lambda>0$, then the weak maximum principle holds in $\Sigma$ for any function $u\in C^{2}(\Sigma)$ with $0<u^{*}<+\infty$.
\end{theorem}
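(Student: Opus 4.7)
The plan is to argue by contradiction, following a Khas'minskii-type penalization in the same spirit as the proof of Theorem 2.9 of \cite{AMR} (which treats the case $f=0$). Suppose, towards a contradiction, that there exists $u\in C^{2}(\Sigma)$ with $0<u^{*}<+\infty$ for which the weak maximum principle for $\Delta^{f}$ fails. Then there exist $\rho<u^{*}$ and $\delta>0$ such that $\Delta^{f}u\geq 2\delta$ on $\Sigma_{\rho}=\{x\in\Sigma:u(x)>\rho\}$. Since $\gamma\to+\infty$ at infinity, we may enlarge the compact set $K$ in the hypothesis so that $\gamma>0$ on $\Sigma\setminus K$ as well.

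For each small $\epsilon>0$ set $w_{\epsilon}=u-\epsilon\gamma$. Because $u$ is bounded above and $\gamma\to+\infty$ at infinity, $w_{\epsilon}$ tends to $-\infty$ along divergent sequences, and hence attains its supremum at an interior point $p_{\epsilon}\in\Sigma$. Fix a point $q\in\Sigma$ with $u(q)>u^{*}-\epsilon^{2}$; comparing $w_{\epsilon}(p_{\epsilon})\geq w_{\epsilon}(q)$ and using $u(p_{\epsilon})\leq u^{*}$ yields
\[
\gamma(p_{\epsilon})\;\leq\;\epsilon+\gamma(q),\qquad u(p_{\epsilon})\;\geq\;u^{*}-\epsilon^{2}-\epsilon\,\gamma(q).
\]
For $\epsilon$ small the second bound gives $u(p_{\epsilon})>\rho$, so $p_{\epsilon}\in\Sigma_{\rho}$ and therefore $\Delta^{f}u(p_{\epsilon})\geq 2\delta$. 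On the other hand, $p_{\epsilon}$ being an interior maximum of $w_{\epsilon}$ gives $\nabla u(p_{\epsilon})=\epsilon\,\nabla\gamma(p_{\epsilon})$ and $\Delta u(p_{\epsilon})\leq\epsilon\,\Delta\gamma(p_{\epsilon})$, whence $\Delta^{f}u(p_{\epsilon})\leq\epsilon\,\Delta^{f}\gamma(p_{\epsilon})$.

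It remains to split on the location of $p_{\epsilon}$. If $p_{\epsilon}\notin K$, the hypothesis $\Delta^{f}\gamma\leq\lambda\gamma$ combined with the first bound above produces
\[
2\delta\;\leq\;\Delta^{f}u(p_{\epsilon})\;\leq\;\epsilon\lambda\,\gamma(p_{\epsilon})\;\leq\;\epsilon\lambda\bigl(\epsilon+\gamma(q)\bigr),
\]
which is absurd as $\epsilon\to 0$. If instead $p_{\epsilon_{n}}\in K$ along a subsequence $\epsilon_{n}\to 0$, by compactness we may assume $p_{\epsilon_{n}}\to p^{*}\in K$; the boundedness of $\gamma$ on $K$ together with the lower bound on $u(p_{\epsilon_{n}})$ forces $u(p^{*})=u^{*}$, so $u$ attains its maximum at $p^{*}$. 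Then $\Delta^{f}u(p^{*})\leq 0$, contradicting $u(p^{*})>\rho$ and $\Delta^{f}u(p^{*})\geq 2\delta>0$.

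The only real obstacle is the simultaneous control of $p_{\epsilon}$: one must guarantee existence of the maximizer (via boundedness of $u$ and $\gamma\to+\infty$), keep $\gamma(p_{\epsilon})$ bounded (via the comparison with $q$), and keep $u(p_{\epsilon})$ above $\rho$ so that the strict positivity $\Delta^{f}u\geq 2\delta$ is actually available at $p_{\epsilon}$. All three are secured by the single choice $u(q)>u^{*}-\epsilon^{2}$ and by taking $\epsilon$ sufficiently small; the rest of the argument is the straightforward inequality chain above.
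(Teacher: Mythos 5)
Your overall strategy -- penalizing $u$ by $\epsilon\gamma$ and analyzing the interior maximum of $w_\epsilon=u-\epsilon\gamma$ -- is exactly the Khas'minskii-type argument the paper points to (AMR Theorem~2.9 adapted to $\Delta^f$ via AMR Theorem~2.14), and the skeleton of the contradiction is right: a maximizer $p_\epsilon$ exists because $u$ is bounded above and $\gamma\to+\infty$; at $p_\epsilon$ one gets $\Delta^f u(p_\epsilon)\le\epsilon\,\Delta^f\gamma(p_\epsilon)$; and this should clash with the assumed lower bound $\Delta^f u\ge 2\delta>0$ on $\Sigma_\rho$.

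The genuine gap is in the comparison point $q$. You choose $q=q_\epsilon$ with $u(q_\epsilon)>u^*-\epsilon^2$, but nothing controls $\gamma(q_\epsilon)$ as $\epsilon\to0$: if $u$ does not attain its supremum (which we may assume after disposing of the attained case by the usual interior-maximum argument), every point $q$ with $u(q)$ close to $u^*$ lies far out, so $\gamma(q_\epsilon)$ may blow up. Consequently your two derived bounds $\gamma(p_\epsilon)\le\epsilon+\gamma(q_\epsilon)$ and $u(p_\epsilon)\ge u^*-\epsilon^2-\epsilon\gamma(q_\epsilon)$ do \emph{not} imply $u(p_\epsilon)>\rho$ for small $\epsilon$, nor does the final chain $2\delta\le\epsilon\lambda(\epsilon+\gamma(q_\epsilon))$ become absurd as $\epsilon\to0$, since $\epsilon\gamma(q_\epsilon)$ need not vanish. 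The same defect infects the compact case: your lower bound on $u(p_{\epsilon_n})$ does not force $u(p^*)=u^*$ without control on $\gamma(q_{\epsilon_n})$.

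The repair is standard and stays within your framework. First normalize $\gamma\ge0$ (add a constant $M\ge-\inf\gamma$; this preserves $\Delta^f\gamma\le\lambda\gamma$ since $\lambda M\ge0$). Then do not fix a single $q$; instead use that $p_\epsilon$ maximizes $w_\epsilon$ over all of $\Sigma$, so
\[
u(p_\epsilon)-\epsilon\gamma(p_\epsilon)=\sup_\Sigma w_\epsilon\ge u(q)-\epsilon\gamma(q)\quad\text{for every }q\in\Sigma.
\]
Letting $\epsilon\to0$ for each fixed $q$ and then taking the supremum over $q$ shows $\sup_\Sigma w_\epsilon\to u^*$. Combined with $u(p_\epsilon)\le u^*$ and $\gamma\ge0$, this yields both $u(p_\epsilon)\to u^*$ (hence $u(p_\epsilon)>\rho$ for small $\epsilon$) and $\epsilon\gamma(p_\epsilon)=u(p_\epsilon)-\sup_\Sigma w_\epsilon\to0$. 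With these two facts in hand, your case split ($p_\epsilon\notin K$ versus $p_{\epsilon_n}\in K$) and the ensuing inequality chains go through exactly as you wrote them, and the contradiction $2\delta\le\epsilon\lambda\gamma(p_\epsilon)\to0$ is now genuine.
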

\begin{definition}{\rm 
Let $(\Sigma,\langle\cdot,\cdot\rangle)$ be a Riemannian manifold and consider $f\in C^{2}(\Sigma)$. We say that the Omori-Yau maximum principle holds for the drift Laplacian $\Delta^f$ on $\Sigma$ if for any $u\in(\Sigma)$ with $u^{*}=\sup_{\Sigma}u<+\infty$, there exists a sequence of points $\{p_{n}\}_{n\in\mathbb{N}}\subset\Sigma$ satisfying
$$
\textnormal{i)}\, u(p_{n})>u^{*}-\frac{1}{n}, \ \ \textnormal{ii)}\, \vert\nabla u (p_{n}) \vert<\frac{1}{n} \,\, \textnormal{ and } \ \ \textnormal{iii)}\, \Delta^{f} u(p_{n})<\frac{1}{n},
$$
 for any  $n\in\mathbb{N}.$}
\end{definition}
\begin{remark}
If the Omori-Yau maximum principle holds in $\Sigma$ for $\Delta^{f}$, see  \cite[Theorem 3.2]{AMR}, then $\Sigma$ is $f$-stochastically complete.
\end{remark}
In the sequence, we give a family of $[\varphi,\vec{e}_{3}]$-minimal surfaces where these maximum principles hold for the drift Laplacian $\Delta^{\varphi}$. For this,  we must introduce the following result \cite[Theorem 7]{PRS}.
\begin{theorem} \label{thm:OY-G}
Let $\Sigma$ be a Riemannian manifold and consider $f\in C^2(\Sigma)$. Assume that there exists a function $\gamma\in C^2(\Sigma)$ such that the following properties hold outside a compact set
\begin{align}
\label{KM}
&\gamma(p)\rightarrow+\infty \ \ \text{as} \ \ p\rightarrow\infty\nonumber \\
& \vert\nabla\gamma\vert\leq A\sqrt{\gamma} \\
& \Delta^{f}\gamma\leq B\sqrt{\gamma\mathcal{G}(\sqrt{\gamma})}\nonumber,
\end{align}
for some $A,B>0$ and $\mathcal{G}:[0,+\infty[\rightarrow\mathbb{R}$ a smooth function satisfying $\mathcal{G}(0)>0$, $\mathcal{G}'(t)\geq 0$ on $[0,+\infty[$, $\mathcal{G}^{-1/2}$ is not integrable and
$$\limsup_{t\rightarrow+\infty}\frac{t\mathcal{G}(\sqrt{t})}{\mathcal{G}(t)}<+\infty.$$
Then, the Omori-Yau maximum principle holds for the drift Laplacian $\Delta^{f}$.
\end{theorem}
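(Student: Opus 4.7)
The plan is to follow the barrier-function method underlying the Pigola--Rigoli--Setti Omori-Yau criterion, arguing by contradiction. Suppose the Omori-Yau principle fails for some $u\in C^2(\Sigma)$ with $u^{*}:=\sup_{\Sigma}u<+\infty$; then there exist $\rho<u^{*}$ and $\varepsilon_{0}>0$ such that on the non-empty open set $\Omega_{\rho}=\{u>\rho\}$ one cannot find a point where $|\nabla u|$ and $\Delta^{f}u$ are both smaller than $\varepsilon_{0}$. The goal is to use the exhaustion function $\gamma$ together with the hypotheses on $\mathcal{G}$ to manufacture precisely such a point.

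First, I would construct an auxiliary barrier $F:[0,+\infty)\to[0,+\infty)$ attuned to $\mathcal{G}$, for instance
\[
F(t)=\int_{0}^{\sqrt{t}}\frac{ds}{\sqrt{\mathcal{G}(s)+1}},
\]
which is smooth, increasing, and divergent as $t\to+\infty$ by the non-integrability of $\mathcal{G}^{-1/2}$. Its derivatives satisfy the identity $2F'(t)\sqrt{t}\sqrt{\mathcal{G}(\sqrt{t})+1}=1$, and a direct computation bounds $F''(t)$ in terms of $F'(t)$ and $\mathcal{G}$, where the $\limsup$ hypothesis in \eqref{KM} is crucial. For each small $\sigma>0$, consider the perturbed function $u_{\sigma}=u-\sigma F(\gamma)$. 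Since $F\circ\gamma\to+\infty$ outside compact sets and $u$ is bounded above, $u_{\sigma}$ attains its supremum at an interior point $q_{\sigma}\in\Sigma$; choosing a reference point in $\Omega_{\rho}$ and taking $\sigma$ small enough ensures that $q_{\sigma}\in\Omega_{\rho}$.

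At $q_{\sigma}$, the first- and second-order conditions give
\[
\nabla u(q_{\sigma})=\sigma F'(\gamma)\nabla\gamma,\qquad \Delta^{f}u(q_{\sigma})\le\sigma\bigl(F''(\gamma)|\nabla\gamma|^{2}+F'(\gamma)\Delta^{f}\gamma\bigr).
\]
Inserting $|\nabla\gamma|\le A\sqrt{\gamma}$ and $\Delta^{f}\gamma\le B\sqrt{\gamma\,\mathcal{G}(\sqrt{\gamma})}$ from \eqref{KM}, together with the ODE satisfied by $F$, both right-hand sides are bounded by a universal constant times $\sigma$, uniformly in $q_{\sigma}$. Hence for $\sigma$ sufficiently small, $q_{\sigma}\in\Omega_{\rho}$ realizes $|\nabla u(q_{\sigma})|<\varepsilon_{0}$ and $\Delta^{f}u(q_{\sigma})<\varepsilon_{0}$ simultaneously, contradicting the supposed failure. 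Varying $\varepsilon_{0}\to 0$ along a subsequence yields the desired Omori-Yau sequence.

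The main obstacle is the bookkeeping for the differential inequality satisfied by $F\circ\gamma$: one must verify that the chain-rule term $F''(\gamma)|\nabla\gamma|^{2}$ is absorbed into a controlled multiple of $F'(\gamma)\Delta^{f}\gamma$, so that the overall bound $\Delta^{f}(F\circ\gamma)\le C$ holds outside a compact set. This is precisely where $\mathcal{G}'\ge 0$, $\mathcal{G}(0)>0$ and the $\limsup$ condition are indispensable, as they guarantee that $\mathcal{G}(\sqrt{t})$ does not dominate $\mathcal{G}(t)$ and thereby balance the two second-order contributions.
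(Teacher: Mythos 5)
The paper itself does not prove Theorem~\ref{thm:OY-G}; it is imported verbatim as Theorem~7 from Pigola--Rigoli--Setti \cite{PRS}, so there is no internal proof to compare against. Your proposal reproduces the standard Khas'minskii-type barrier argument that underlies the PRS criterion, and the overall skeleton is correct: build an exhausting barrier $F\circ\gamma$, penalize $u$ by $\sigma F(\gamma)$, locate the interior maximum $q_\sigma$ (which exists because $\gamma$, hence $F\circ\gamma$, is proper), read off first- and second-order conditions, and let $\sigma\to 0$. You should also add the remark that either $u$ attains its supremum (in which case Omori--Yau is trivial) or $q_\sigma\to\infty$ as $\sigma\to 0$, so that for small $\sigma$ the point $q_\sigma$ lies outside the compact set on which \eqref{KM} may fail.

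The one substantive misstep is your diagnosis of where the $\limsup$ hypothesis enters. With your barrier $F(t)=\int_0^{\sqrt t}(\mathcal{G}(s)+1)^{-1/2}\,ds$ one has
\[
F'(t)=\frac{1}{2\sqrt{t}\,\sqrt{\mathcal{G}(\sqrt t)+1}},\qquad
F''(t)=-\frac{1}{4t^{3/2}\sqrt{\mathcal{G}(\sqrt t)+1}}-\frac{\mathcal{G}'(\sqrt t)}{8t\,(\mathcal{G}(\sqrt t)+1)^{3/2}}\le 0,
\]
so $F''\le 0$ follows from $\mathcal{G}'\ge 0$ alone, and the term $F''(\gamma)|\nabla\gamma|^2$ is simply discarded rather than ``absorbed'' into $F'(\gamma)\Delta^f\gamma$. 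Plugging \eqref{KM} in then gives
\[
|\nabla(F\circ\gamma)|\le \frac{A}{2\sqrt{\mathcal{G}(0)+1}},\qquad
\Delta^f(F\circ\gamma)\le F'(\gamma)\,B\sqrt{\gamma\,\mathcal{G}(\sqrt\gamma)}\le \frac{B}{2},
\]
both bounded by constants without any use of $\limsup_{t\to\infty}t\mathcal{G}(\sqrt t)/\mathcal{G}(t)<\infty$. That hypothesis is a structural assumption on $\mathcal{G}$ carried over from \cite{PRS} (where it is used to pass between the $\gamma\sim r^2$ and $\gamma\sim r$ normalizations and to derive the sufficient conditions of the type quoted after the theorem), and $\mathcal{G}(t)$---as opposed to $\mathcal{G}(\sqrt t)$---never appears in your chain-rule bookkeeping. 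Claiming it is ``crucial'' for controlling $F''$ is therefore a misattribution: that step is in fact trivial, and a reader trying to follow your sketch to the letter would be sent looking for a subtlety that is not there.
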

A special example verifying the previous properties is the following function
$$\mathcal{G}(t)=t^2\Pi_{j=1}^{n}\left(\log ^{(j)}(t)\right)^2 \ \ \text{for any} \ \ t>>1,$$
where $\log^{(j)}$ is the $j$-iterated logarithm. Hence, from \cite[Example 7]{PRS}, if $\Sigma$ is an isometric proper surface in $\mathbb{R}^3$ such that the cut locus of $0$ is disjoint to $\Sigma$ and the mean curvature $H$ of $\Sigma$ satisfies
$$\vert H(p)\vert+\vert\nabla f\vert\leq B\sqrt{\mathcal{G}(\vert p\vert)},$$
for some constant $B>0$ in the complement of a compact set of $\Sigma$, then the Omori-Yau maximum principle holds for $\Delta^{\varphi}.$ Finally, as a direct consequence of this result, we can give the following application for the family of isometric proper $[\varphi,\vec{e}_3]$-minimal surfaces in $\mathbb{R}^3$ where $\varphi$ satisfies the following condition outside a compact set of $\Sigma$
\begin{equation}
\label{condK}
\vert\dot{\varphi}(\mu(p))\vert \leq B\sqrt{\mathcal{G}(\vert p\vert)}, \ \ \text{ for some constant } B>0.
\end{equation}
\begin{remark}
Notice that, up to horizontal translation, we can always assume that the cut locus of $0$ is disjoint from $\Sigma$.
\end{remark}
\begin{corollary}
The Omori-Yau maximum principle for $\Delta^{\varphi}$ holds on any proper $[\varphi,\vec{e}_{3}]$-minimal surface in $\mathbb{R}^{3}$ with function $\varphi$ satisfying \eqref{condK}. In particular, the weak maximum principle for $\Delta^{\varphi}$ holds.
\end{corollary}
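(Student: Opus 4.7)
The strategy is to invoke Theorem \ref{thm:OY-G} directly, with the exhaustion function given by the (restricted) squared extrinsic distance $\gamma(p) = |p|^2$ on $\Sigma$, and with $f=\varphi$. The work consists in verifying the three hypotheses of that theorem, which reduces to translating the condition \eqref{condK} on $\varphi$ into the required bounds on $\gamma$, using the $[\varphi,\vec{e}_3]$-minimality equation.

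First I would note that properness of $\Sigma$ in $\mathbb{R}^3$ gives the exhaustion property $\gamma(p)\to +\infty$ as $p\to\infty$ in $\Sigma$, since divergent sequences in a proper surface must leave every compact set of $\mathbb{R}^3$. Next, the gradient estimate is immediate: if $X$ denotes the position vector then $\nabla \gamma = 2 X^{T}$, the tangential projection of $X$ onto $T\Sigma$, so $|\nabla \gamma| \le 2|X| = 2\sqrt{\gamma}$, giving $A=2$.

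The main computation is for $\Delta^{\varphi}\gamma$. Using the standard formula for the Laplacian of $|X|^2$ on a surface isometrically immersed in $\mathbb{R}^3$, one obtains
\begin{equation*}
\Delta \gamma = 4 + 2\langle X, \vec{H}\rangle,
\end{equation*}
and the drift term is
\begin{equation*}
\langle \nabla \varphi, \nabla \gamma \rangle = 2\dot{\varphi}\,\langle \vec{e}_3^{\,T}, X^{T}\rangle,
\end{equation*}
so that $|\langle\nabla\varphi,\nabla\gamma\rangle|\le 2|\dot{\varphi}|\,|X|$. Combined with the $[\varphi,\vec{e}_3]$-minimality equation $H=-\dot{\varphi}\,\eta$ (which yields $|H|\le|\dot\varphi|$), this gives
\begin{equation*}
\Delta^{\varphi}\gamma \;\le\; 4 + C|\dot\varphi(\mu(p))|\,|p|
\end{equation*}
for some absolute constant $C>0$. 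Substituting the hypothesis \eqref{condK} turns this into
\begin{equation*}
\Delta^{\varphi}\gamma \;\le\; 4 + CB\,|p|\sqrt{\mathcal{G}(|p|)} \;=\; 4 + CB\,\sqrt{\gamma\,\mathcal{G}(\sqrt{\gamma})},
\end{equation*}
outside a compact set, and since $\mathcal{G}(0)>0$ and $\gamma\to+\infty$, the constant term can be absorbed to obtain $\Delta^{\varphi}\gamma\le B'\sqrt{\gamma\,\mathcal{G}(\sqrt{\gamma})}$ for a suitable $B'>0$ outside a (possibly larger) compact set. This is precisely the third condition in \eqref{KM}.

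With all three conditions of Theorem \ref{thm:OY-G} verified, the Omori-Yau maximum principle for $\Delta^{\varphi}$ holds on $\Sigma$; the weak maximum principle follows from the remark recalled immediately before. There is no real obstacle here: the whole point is that the $[\varphi,\vec{e}_3]$-minimality equation converts the bound on $\dot{\varphi}$ into a bound on $|H|$, so the growth assumption on $\varphi$ simultaneously controls the mean curvature and the drift term, which is exactly the input needed for the example from \cite{PRS} invoked in the discussion. The only delicate point, handled in the remark preceding the corollary, is the hypothesis that the cut locus of the origin be disjoint from $\Sigma$, which can be arranged by a horizontal translation without affecting either the $[\varphi,\vec{e}_3]$-minimality or condition \eqref{condK}.
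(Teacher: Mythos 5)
Your proof is correct and follows the same route as the paper: the paper obtains the corollary by citing \cite[Example 7]{PRS}, which is exactly the verification you carry out explicitly, namely applying Theorem \ref{thm:OY-G} with the exhaustion function $\gamma(p)=|p|^2$ and using the $[\varphi,\vec{e}_3]$-minimality equation to convert the bound on $\dot{\varphi}$ into bounds on $|H|$ and $|\nabla\varphi|$. Your treatment of the cut-locus hypothesis by a horizontal translation also matches the remark the paper makes just before the corollary.
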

In the particular case $\mathcal{G}(t)=t^2$, we have the well-known Khas'minskii test for any gradient Schr\"ondinger operator on $\Sigma$. The reader can see all details in \cite{AMR}. For instance, an application in this case is given in \cite{MMJ}, where the Omori-Yau maximum principle is applied for properly embedded $[\varphi,\vec{e_{3}}]$-minimal surfaces in $\mathbb{R}^3$. In our case, we have the following corollaries.

\begin{corollary} \label{cor:main}
The Omori-Yau maximum principle for $\Delta^{\varphi}$ holds on any isometric proper $[\varphi,\vec{e}_{3}]$-minimal surface in $\mathbb{R}^{3}$ whose function $\varphi$ has at most quadratic growth. In particular, the weak maximum principle for $\Delta^{\varphi}$ holds.
\end{corollary}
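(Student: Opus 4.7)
The plan is to derive this from the previous corollary (the one stating that condition \eqref{condK} implies Omori-Yau for $\Delta^{\varphi}$) by verifying \eqref{condK} with the Khas'minskii choice $\mathcal{G}(t)=1+t^2$. So the whole proof reduces to an elementary estimate on $\dot\varphi$.

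First I would make precise what ``at most quadratic growth of $\varphi$'' entails. Following \cite{MM,MMJ,MJ}, where this growth hypothesis is standard, one has constants $c_1,c_2,c_3,c_4>0$ such that $|\varphi(z)|\le c_1+c_2 z^2$ together with the companion linear bound $|\dot\varphi(z)|\le c_3+c_4 |z|$. (In fact, given the smoothness of $\varphi$, the linear bound on $\dot\varphi$ is forced by the quadratic bound on $\varphi$ via standard interpolation, so no additional hypothesis is needed.) Then, for $p\in\Sigma$, the height $\mu(p)=\langle p,\vec{e}_3\rangle$ satisfies $|\mu(p)|\le |p|$, and hence
\begin{equation*}
|\dot\varphi(\mu(p))|\le c_3+c_4\,|\mu(p)|\le c_3+c_4\,|p|.
\end{equation*}

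Next I would check that $\mathcal{G}(t)=1+t^2$ satisfies all the hypotheses of Theorem \ref{thm:OY-G}: it is smooth, $\mathcal{G}(0)=1>0$, $\mathcal{G}'(t)=2t\ge 0$, $\mathcal{G}^{-1/2}(t)=(1+t^2)^{-1/2}$ is not integrable at infinity, and $t\,\mathcal{G}(\sqrt t)/\mathcal{G}(t)=t(1+t)/(1+t^2)$ is bounded as $t\to+\infty$. Outside a sufficiently large compact set of $\Sigma$ we therefore have $|\dot\varphi(\mu(p))|\le B\sqrt{\mathcal{G}(|p|)}$ for a suitable $B>0$, which is exactly \eqref{condK}. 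The previous corollary then yields the Omori-Yau maximum principle for the drift Laplacian $\Delta^{\varphi}$ on $\Sigma$. Finally, by the remark following the definition of the Omori-Yau maximum principle (namely \cite[Theorem 3.2]{AMR}), this implies that $\Sigma$ is $\varphi$-stochastically complete, giving the weak maximum principle as claimed.

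The only delicate point is the passage from quadratic growth of $\varphi$ to linear growth of $\dot\varphi$; once this is invoked (as a standard fact in this theory), the rest is a mechanical verification of the hypotheses of Theorem \ref{thm:OY-G}. The assumption that $\Sigma$ is isometrically immersed and proper ensures that one can translate horizontally so that the cut locus of the origin is disjoint from $\Sigma$, as required for applying the criterion recalled from \cite[Example~7]{PRS}.
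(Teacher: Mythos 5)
Your proposal follows essentially the same route as the paper: invoke the Khas'minskii-type criterion obtained from Theorem \ref{thm:OY-G} via condition \eqref{condK}, with $\mathcal{G}$ taken quadratic (the paper uses $\mathcal{G}(t)=t^2$; your $1+t^2$ is in fact the more careful choice, since $\mathcal{G}(0)=0$ would violate the hypothesis $\mathcal{G}(0)>0$). One incorrect aside, though not load-bearing: the parenthetical claim that $|\varphi(z)|\le c_1+c_2z^2$ forces $|\dot\varphi(z)|\le c_3+c_4|z|$ ``via standard interpolation'' is false without extra control on $\ddot\varphi$ (e.g.\ $\varphi(z)=z^2\sin(e^z)$ is $O(z^2)$ but $\dot\varphi$ grows like $z^2e^z$); in \cite{MM,MMJ,MJ} the phrase ``at most quadratic growth'' is a hypothesis on $\dot\varphi$ itself, which is the reading you rely on in any case, so the proof stands.
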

\begin{corollary}
\label{cor:main2}
The Omori-Yau maximum principle for $\Delta^{\varphi}$ holds for any properly immersed surface with bounded mean curvature. In particular, it holds for minimal surfaces, translators to the mean curvature flow, and any $\alpha$-singular minimal surface of $\mathbb{R}^3$, with vertical height bounded away from zero.
\end{corollary}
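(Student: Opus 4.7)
The plan is to recognize Corollary \ref{cor:main2} as a direct specialization of the general criterion extracted from Theorem \ref{thm:OY-G} (stated just before Corollary \ref{cor:main}): the Omori-Yau maximum principle for $\Delta^{\varphi}$ holds on a proper isometric surface $\Sigma\subset\mathbb{R}^3$ (with the cut locus of $0$ disjoint from $\Sigma$, arranged by a harmless horizontal translation, as in the remark preceding Corollary \ref{cor:main}) whenever
$|H(p)|+|\nabla\varphi(p)|\leq B\sqrt{\mathcal{G}(|p|)}$
outside a compact set of $\Sigma$, for some admissible $\mathcal{G}$.

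I would take $\mathcal{G}(t)=1+t^2$, which is smooth, positive, non-decreasing, satisfies $\mathcal{G}(0)>0$, has $\mathcal{G}^{-1/2}$ non-integrable, and fulfils the $\limsup$ condition in Theorem \ref{thm:OY-G}. With this choice the criterion becomes $|H|+|\nabla\varphi|\leq B\sqrt{1+|p|^2}$, so the entire task reduces to showing that $|H|+|\nabla\varphi|$ is uniformly bounded on $\Sigma$. Properness of $\Sigma$ forces $|p|\to+\infty$ as $p\to\infty$ on $\Sigma$, whence any uniform bound on $|H|+|\nabla\varphi|$ is automatically dominated by $B\sqrt{1+|p|^2}$ outside a sufficiently large compact set. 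The bound on $|H|$ is the standing hypothesis. Since $\varphi$ depends only on the vertical coordinate $\mu$, one has $|\nabla\varphi|\leq|\overline{\nabla}\varphi|=|\dot\varphi(\mu)|$, so it remains to control $|\dot\varphi|$ along $\Sigma$ in the three listed families.

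I would then dispatch the three examples case by case: for a classical minimal surface one takes $\varphi\equiv 0$ and $\Delta^\varphi=\Delta$, so there is nothing to check; for a translator of the mean curvature flow $\varphi(\mu)=\mu$ and $|\dot\varphi|\equiv 1$; for an $\alpha$-singular minimal surface with vertical height bounded away from zero, say $\mu\geq z_0>0$ on $\Sigma$, the choice $\varphi(\mu)=\alpha\log\mu$ yields $|\dot\varphi(\mu)|=|\alpha|/\mu\leq|\alpha|/z_0$. In each case $|\nabla\varphi|$ is uniformly bounded, and the conclusion follows from Theorem \ref{thm:OY-G} together with the remark that the Omori-Yau principle implies the weak maximum principle, hence $\varphi$-stochastic completeness.

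No genuine obstacle is anticipated: the corollary is essentially a packaging of the general criterion in settings where the mean curvature and the drift gradient are simultaneously bounded a priori. The only mild bookkeeping point is that $\mathcal{G}(t)=t^2$ itself fails $\mathcal{G}(0)>0$, so one uses $\mathcal{G}(t)=1+t^2$; apart from this adjustment, the application of Theorem \ref{thm:OY-G} is immediate and requires no new analytic input beyond what was already established for Corollary \ref{cor:main}.
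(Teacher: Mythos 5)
Your proposal is correct and takes essentially the same (implicit) approach as the paper: Corollary \ref{cor:main2} is stated without a separate proof precisely because, like Corollary \ref{cor:main}, it is an immediate application of the criterion $|H|+|\nabla\varphi|\leq B\sqrt{\mathcal{G}(|p|)}$ extracted from Theorem \ref{thm:OY-G} via \cite[Example 7]{PRS}, and your case-by-case check that $|\dot\varphi|$ is uniformly bounded along $\Sigma$ (namely $\equiv 0$ for minimal surfaces, $\equiv 1$ for translators, and $\leq|\alpha|/z_0$ for $\alpha$-singular minimal surfaces with $\mu\geq z_0>0$) is exactly the implicit content. The cosmetic replacement of $\mathcal{G}(t)=t^2$ by $\mathcal{G}(t)=1+t^2$ to secure $\mathcal{G}(0)>0$ is a reasonable tidying and changes nothing substantive.
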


We may wonder when we have an Omori-Yau maximum principle for $\Delta^{\varphi}$ on complete surfaces with boundary in $\mathbb{R}^3$. In this context, we conclude this section by providing sufficient conditions for applying this maximum principle. Following \cite{CM} together with \cite{X}, we have the following result

\begin{theorem}[Omori-Yau maximum principle with boundary]
\label{OYboundary}
Let $\Sigma$ be a complete proper $[\varphi,\vec{e}_{3}]$-minimal surface in $\mathbb{R}^3$ with boundary $\partial\Sigma$ ({possibly empty}) and whose function $\varphi$ has at most quadratic growth. Suppose that $f:\Sigma\rightarrow\mathbb{R}$ is a function satisfying the following it
\begin{enumerate}
    \item $\sup_{\Sigma}\vert f\vert<+\infty \ \ , \ \ \sup_{\partial\Sigma} f<\sup_{\Sigma}f.$
    \item $f$ is continuous on $\Sigma$.
    \item There exists $\epsilon_{f}>0$ such that $f$ is the class $C^2$ on the subset
    $$\{p\in\Sigma : f(p)>\sup_{\Sigma}f-\epsilon_{f}\}.$$
\end{enumerate}
Then, there exists a sequence $\{p_{n}\}_{n\in\mathbb{N}}\subset\Sigma$ such that
$$f(p_{n})\rightarrow \sup_{\Sigma}f \ \ , \vert\nabla f (p_{n})\vert\rightarrow 0 \ \ \text{and} \ \ \Delta^{\varphi}f (p_{n})\leq 0.$$
\end{theorem}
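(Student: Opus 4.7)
\noindent The plan is to adapt the Khas'minskii penalty argument that underlies Theorem \ref{thm:OY-G} (and that appears in \cite{CM} and \cite{X}) to a manifold with boundary. Two preliminary moves are required. First, I would replace $f$ by a globally $C^{2}$ companion $\tilde{f}$ on $\Sigma$ which inherits the supremum $f^{*}=\sup_{\Sigma}f$ and agrees with $f$ on the region where $f$ is close to $f^{*}$; this is needed because $f$ itself is only assumed $C^{2}$ above a sub-level of its supremum. Second, I would use the strict boundary gap $\sup_{\partial\Sigma}f<f^{*}$ to force the maxima of the perturbed functional $\tilde{f}-\rho\gamma$ to be achieved strictly inside $\Sigma$.

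\medskip

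\noindent For the $C^{2}$ extension I would fix $\delta>0$ with $2\delta<\epsilon_{f}$ and $\sup_{\partial\Sigma}f<f^{*}-2\delta$, choose $\eta\in(0,\delta)$, and take a non-decreasing $C^{\infty}$ function $\Phi_{\eta}\colon\mathbb{R}\to\mathbb{R}$ equal to $f^{*}-\delta$ on $(-\infty,f^{*}-\delta-\eta]$, equal to the identity on $[f^{*}-\delta+\eta,\infty)$, and with $\Phi_{\eta}'\in[0,1]$. Setting $\tilde{f}=\Phi_{\eta}\circ f$, I claim $\tilde{f}\in C^{2}(\Sigma)$: on the open set $\{f<f^{*}-\delta\}$ the composition is locally constant (hence $C^{\infty}$), while on $\{f>f^{*}-\epsilon_{f}\}$, which contains $\{f>f^{*}-\delta-\eta\}$, the function $f$ is already $C^{2}$ and so is $\Phi_{\eta}\circ f$. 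By construction $\sup_{\Sigma}\tilde{f}=f^{*}$ and $\sup_{\partial\Sigma}\tilde{f}\le f^{*}-\delta$, and wherever $f>f^{*}-\delta+\eta$ the chain rule gives $\nabla\tilde{f}=\nabla f$ and $\Delta^{\varphi}\tilde{f}=\Delta^{\varphi}f$; thus any near-maximising sequence for $\tilde{f}$ returns the same data for $f$.

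\medskip

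\noindent Since $\varphi$ has at most quadratic growth and $\Sigma$ is proper in $\mathbb{R}^{3}$, condition \eqref{condK} holds with $\mathcal{G}(t)=t^{2}$, so Theorem \ref{thm:OY-G} (the same mechanism that produces Corollary \ref{cor:main}) supplies a $C^{2}$ exhaustion $\gamma\ge 0$ with $|\nabla\gamma|\le A\sqrt{\gamma}$ and $\Delta^{\varphi}\gamma\le B\gamma$ outside a compact set. For $\rho>0$, I would consider $h_{\rho}=\tilde{f}-\rho\gamma$. Since $\tilde{f}$ is bounded and $\gamma\to+\infty$ properly, $h_{\rho}$ attains its global maximum at some $p_{\rho}\in\Sigma$. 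Fixing any $q_{0}$ with $f(q_{0})>f^{*}-\delta/4$, the comparison $h_{\rho}(p_{\rho})\ge h_{\rho}(q_{0})\ge f^{*}-\delta/4-\rho\gamma(q_{0})$ forces, once $\rho<\delta/(4\gamma(q_{0}))$, the inequalities $h_{\rho}(p_{\rho})>f^{*}-\delta/2>f^{*}-\delta\ge\sup_{\partial\Sigma}h_{\rho}$, so $p_{\rho}$ is interior. At this interior critical maximum, $\nabla\tilde{f}(p_{\rho})=\rho\nabla\gamma(p_{\rho})$ and $\Delta^{\varphi}\tilde{f}(p_{\rho})\le\rho\Delta^{\varphi}\gamma(p_{\rho})$; the bounds on $\gamma$ then give $|\nabla\tilde{f}(p_{\rho})|\le A\rho\sqrt{\gamma(p_{\rho})}=O(\sqrt{\rho})$ and $\Delta^{\varphi}\tilde{f}(p_{\rho})\le B\rho\gamma(p_{\rho})=O(\delta)$, while also $\tilde{f}(p_{\rho})\ge f^{*}-\delta/4-\rho\gamma(q_{0})\to f^{*}$. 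Running the whole construction with $\delta_{k}=1/k$ and $\rho_{k}$ chosen sufficiently small, a diagonal extraction yields $p_{k}\in\Sigma$ with $f(p_{k})\to f^{*}$, $|\nabla f(p_{k})|\to 0$ and $\limsup_{k}\Delta^{\varphi}f(p_{k})\le 0$, which is the stated conclusion.

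\medskip

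\noindent The main technical obstacle is the $C^{2}$ smoothing: the hypothesis provides $f$ only continuous globally and $C^{2}$ above a sub-level, and the extension must be genuinely twice differentiable across the regularity interface without lifting the supremum or worsening the boundary bound. The cut-off $\tilde{f}=\Phi_{\eta}\circ f$ handles this cleanly because on the complement of the $C^{2}$ region the composition is locally constant and so automatically smooth. Once this reduction is in place, the penalty step is essentially the argument behind Theorem \ref{thm:OY-G}, and the only novelty is the elementary observation that the boundary gap $\sup_{\partial\Sigma}f<f^{*}$ prevents the perturbed maximum from being absorbed into $\partial\Sigma$.
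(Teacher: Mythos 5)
Your proof is essentially the same penalty (Khas'minskii) strategy the paper uses, but it is more explicit and a bit more general in two places. The paper constructs the specific exhaustion $\psi_{a}(p)=\log(a+r^{2}(p))$, computes $\vert\nabla\psi_{a}\vert$ and $\Delta^{\varphi}\psi_{a}$ directly, notes that both are \emph{uniformly bounded} precisely because $\varphi$ has at most quadratic growth, and then says ``adapt the proof of Xin'' -- i.e., it uses the stronger bounded-gradient/bounded-drift-Laplacian form of the Khas'minskii test, which is what Xin's argument expects. You instead invoke the weaker growth conditions $\vert\nabla\gamma\vert\le A\sqrt{\gamma}$, $\Delta^{\varphi}\gamma\le B\gamma$ from Theorem~\ref{thm:OY-G}. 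Both routes close: with the bound $\rho\gamma(p_{\rho})\le\delta/4+\rho\gamma(q_{0})$ that you derive from the comparison with $q_{0}$, the penalty estimates do produce $\vert\nabla f(p_{\rho})\vert\to 0$ and $\limsup\Delta^{\varphi}f(p_{\rho})\le 0$ after a diagonal extraction. The paper's version is cleaner at this step because with $\vert\nabla\psi_{a}\vert\le\delta$ and $\vert\Delta^{\varphi}\psi_{a}\vert\le\delta$ one gets $\vert\nabla f(p_{\epsilon})\vert\le\epsilon\delta$ and $\Delta^{\varphi}f(p_{\epsilon})\le\epsilon\delta$ immediately, without needing to control $\gamma(p_{\rho})$ through the comparison inequality. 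Your second addition, the $C^{2}$ smoothing $\tilde f=\Phi_{\eta}\circ f$, is a legitimate and tidy reduction that the paper leaves implicit; in fact it can be avoided entirely, since the comparison with $q_{0}$ already forces the penalised maximum $p_{\rho}$ to land inside $\{f>f^{*}-\epsilon_{f}\}$ once $\rho$ is small, so the local $C^{2}$ regularity of $f$ near the maximum is all one needs.

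One small slip to correct: you claim $\Phi_{\eta}\circ f$ is locally constant on the open set $\{f<f^{*}-\delta\}$. That is not quite right -- $\Phi_{\eta}$ is constant only on $(-\infty,f^{*}-\delta-\eta]$, so the composition is locally constant only on $\{f<f^{*}-\delta-\eta\}$, while on the transition strip $\{f^{*}-\delta-\eta<f<f^{*}-\delta\}$ it is merely $C^{2}$. The argument still goes through because this strip is contained in $\{f>f^{*}-\epsilon_{f}\}$ (using $\eta<\delta$ and $2\delta<\epsilon_{f}$), where $f$ is $C^{2}$; just replace the cover $\{f<f^{*}-\delta\}\cup\{f>f^{*}-\epsilon_{f}\}$ by $\{f<f^{*}-\delta-\eta\}\cup\{f>f^{*}-\epsilon_{f}\}$ and the reasoning is airtight.
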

\begin{proof}
Consider the function $r^2:\Sigma\rightarrow\mathbb{R}$ given by $r^2 (p)=\langle p,p\rangle$ for any $p\in\Sigma$. If there exists a smooth positive function $\psi:\Sigma\rightarrow ]0,+\infty[$ such that  $\psi\rightarrow +\infty \text{ as } r\rightarrow+\infty,$ $\vert\nabla\psi\vert\leq \delta$ and $\vert\Delta^{\varphi}\psi\vert\leq \delta,$ for some $\delta>0$. Then, we can adapt the proof of Xin \cite{X} in our situation since $f$ is bounded on $\Sigma$. 

Define $\psi_{a}:\Sigma\rightarrow\mathbb{R}$ given by $\psi_{a}(p)=\text{log}(a+r^2(p))$ for any $p\in\Sigma$. Taking $a>0$ large enough, we can assume that $\psi$ is smooth everywhere and $\psi>0$ on $\Sigma$. Moreover, by a straightforward computation, we get the following
$$\vert\nabla\psi_{a}(p)\vert=\frac{\nabla r^2(p)}{a+r^2(p)}=\frac{2 p^T}{a+r^2(p)},$$
$$\Delta^{\varphi}\psi_{a}(p)=-\frac{\vert\nabla r^2(p)\vert^2}{(a+r^2(p))^2}+\frac{4+2\dot{\varphi}(\mu)\mu}{a+r^2(p)}.$$
Hence, the proof follows since $\varphi$ has at most quadratic growth.
\end{proof}

Finally, we present a proposition, which is a consequence of Ekeland principle, that states that for any $C^1$ function $u$ on a complete manifold $\Sigma$ with  $u^* = \sup_{\Sigma} u < + \infty$, we can find a sequence $\left\{ x_n \right\} \subset \Sigma$ satisfying i) and ii) of Definition 2.5 (see \cite[Proposition 2.2]{AMR}). Namely,

\begin{proposition} \label{ekeland}
    Let $(\Sigma,\langle\cdot,\cdot\rangle)$ be a complete manifold and $u : \Sigma \rightarrow \mathbb{R}$ a $C^1$ function such that $u^* = \sup_{\Sigma} u < + \infty$. Then, for every sequence $\left\{ y_n \right\} \subset \Sigma$ such that $u(y_n) \rightarrow u^*$ as $n \rightarrow + \infty$ there exists a sequence $\left\{ x_n \right\}$ with the properties
  $$ \text{ i) } u(x_n) \rightarrow u^* , \ \ \text{ ii) }|\nabla u (x_n) | \rightarrow 0 \ \ \text{and} \ \ \text{iii) } d(x_n,y_n) \rightarrow 0
    $$
    as $n \rightarrow + \infty$.
\end{proposition}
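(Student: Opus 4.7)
The plan is to derive this directly from the Ekeland variational principle applied to $F=-u$ on the complete metric space $(\Sigma,d)$, where $d$ is the Riemannian distance induced by $\langle\cdot,\cdot\rangle$. Since $u^{*}<+\infty$, the function $F=-u$ is continuous (it is actually $C^{1}$) and bounded below by $-u^{*}$, so Ekeland's principle applies.

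For the main step, given the sequence $\{y_{n}\}\subset\Sigma$ with $u(y_{n})\to u^{*}$, set
$$\varepsilon_{n}=u^{*}-u(y_{n}).$$
Discarding finitely many terms (or taking $x_{n}=y_{n}$ whenever $\varepsilon_{n}=0$), we may assume $\varepsilon_{n}>0$ and $\varepsilon_{n}\to 0$. Applying Ekeland to $F=-u$ with parameter $\varepsilon_{n}$ and scale $\lambda_{n}=\sqrt{\varepsilon_{n}}$, I obtain a point $x_{n}\in\Sigma$ satisfying
$$u(x_{n})\geq u(y_{n}),\qquad d(x_{n},y_{n})\leq\sqrt{\varepsilon_{n}},$$
together with the strict variational inequality
$$u(z)-u(x_{n})<\sqrt{\varepsilon_{n}}\,d(z,x_{n})\qquad\text{for every }z\neq x_{n}.$$
This immediately gives i) because $u(y_{n})\leq u(x_{n})\leq u^{*}$, and iii) because $d(x_{n},y_{n})\leq\sqrt{\varepsilon_{n}}\to 0$.

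For part ii), I would extract the gradient estimate from the variational inequality by differentiating along geodesics. Fix any unit vector $v\in T_{x_{n}}\Sigma$ and set $z=\exp_{x_{n}}(tv)$ for small $t>0$. Then $d(z,x_{n})\leq t$ and the $C^{1}$ expansion of $u$ along the geodesic yields
$$t\,\langle\nabla u(x_{n}),v\rangle+o(t)=u(z)-u(x_{n})<\sqrt{\varepsilon_{n}}\,t.$$
Dividing by $t$ and letting $t\to 0^{+}$ gives $\langle\nabla u(x_{n}),v\rangle\leq\sqrt{\varepsilon_{n}}$ for every unit vector $v$, so $|\nabla u(x_{n})|\leq\sqrt{\varepsilon_{n}}\to 0$, which is ii).

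There is essentially no obstacle here beyond checking that Ekeland's principle is applicable in this Riemannian metric setting, which is standard since completeness of $(\Sigma,\langle\cdot,\cdot\rangle)$ implies completeness of $(\Sigma,d)$ by Hopf–Rinow. The one design choice that matters is the coupling $\lambda_{n}=\sqrt{\varepsilon_{n}}$: this balances the distance bound $d(x_{n},y_{n})\leq\lambda_{n}$ against the slope bound $\varepsilon_{n}/\lambda_{n}=\sqrt{\varepsilon_{n}}$, ensuring both tend to zero simultaneously.
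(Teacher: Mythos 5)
Your proposal is correct, and it takes the same route the paper itself invokes: the paper states that the proposition ``is a consequence of Ekeland principle'' and simply cites \cite[Proposition 2.2]{AMR} rather than spelling out the argument, whereas you supply exactly that argument. The key ingredients you use --- applying Ekeland to $F=-u$ on the complete metric space $(\Sigma,d)$, the coupling $\lambda_n=\sqrt{\varepsilon_n}$ so that both $d(x_n,y_n)\leq\sqrt{\varepsilon_n}$ and the slope bound $\varepsilon_n/\lambda_n=\sqrt{\varepsilon_n}$ tend to zero, and the first-order expansion along $t\mapsto\exp_{x_n}(tv)$ to convert the variational inequality into $|\nabla u(x_n)|\leq\sqrt{\varepsilon_n}$ --- are precisely the standard proof of this statement. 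One cosmetic remark: rather than ``discarding finitely many terms'' (which need not reduce to the case $\varepsilon_n>0$ for all $n$, since $u$ may attain $u^*$ along infinitely many $y_n$), it is cleaner to rely solely on the parenthetical: set $x_n=y_n$ whenever $\varepsilon_n=0$, noting that a $C^1$ function has vanishing gradient at a maximum, and apply Ekeland only when $\varepsilon_n>0$.
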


\section{Proofs of Theorems A and C}
We first prove Theorem C, as Theorem A will be a consequence of Theorem C and Corollary \ref{cor:main}.

\begin{proof}[Proof of Theorem C] We argue by contradiction.
Suppose that there exists a $\varphi$-stochastically complete $[\varphi,\vec{e}_{3}]$-minimal surface in the half-space $$\mathcal{H}^{\varphi}_{\vec{v}}=\{x\in\mathbb{R}^{3}: \text{sgn}(\dot{\varphi})\langle x,\vec{v}\rangle\leq 0\},$$ with $\langle \vec{v},\vec{e}_{3}\rangle>0$ and
$$\text{sgn}(\dot{\varphi})=\left\lbrace\begin{array}{c}
 1 \ \ \text{ if } \dot{\varphi} >0, \\
-1 \ \ \text{ if } \dot{\varphi}<0.
\end{array}\right.$$ Without loss of generality, we can assume that the vector field $\vec{v}=\vec{e}_{3}+\vec{a}$ is such that $\langle \vec{v},\vec{e}_{3}\rangle=c>0$, where $\vec{a}$ is a constant vector, and $c$ is a positive real number. Now, consider the height function of $\Sigma$ with respect to the direction $\vec{v}$ given by $\mu_{\vec{v}}:\Sigma\rightarrow\mathbb{R}$, given by $\mu_{\vec{v}}(p)=\langle p,\vec{v}\rangle$ for $p\in\Sigma$. 

Fix any point $p\in\Sigma$. For any vector field $X\in T_{p}\Sigma$, we have
$$\langle X,\nabla\mu_{\vec{v}}\rangle=d\mu_{\vec{v}}(p)(X)=\langle X,\vec{v}^{T}\rangle,$$
where $(\cdot)^T$ denotes the projection on the tangent bundle of $\Sigma$. Then, if we denote by $\{v_{i}\}$, $i \in \left\{1,2 \right\}$, an orthonormal frame of $T_{p}\Sigma$, by $D$ the Levi-Civita connection of $\mathbb{R}^{3}$ and by $N$ the unit normal vector of $\Sigma$, we obtain that
\begin{equation}
\label{lap}
\langle D_{v_{i}}\nabla\mu_{\vec{v}},v_{i}\rangle=\langle D_{v_{i}}\vec{v}^{T},v_{i}\rangle=-\langle\vec{v},N\rangle\langle D_{v_{i}}N,v_{i}\rangle=-\langle\vec{v},N\rangle\mathcal{S}(v_{i},v_{i}),
\end{equation}
where $\mathcal{S}$ stands the second fundamental form of $\Sigma$. Hence, from equation \eqref{lap} together with the definition of being $[\varphi,\vec{e}_{3}]$-minimal, we get the
\begin{equation}
\label{lapmu}
\Delta\mu_{\vec{v}}=\dot{\varphi}\eta\langle \vec{v},N \rangle.
\end{equation}
Moreover, since $\varphi$ restricted to $\Sigma$ is given by $\varphi (p)  = \varphi( \langle p, \vec{e}_3 \rangle )$, for $p \in \Sigma$, we have
\begin{equation}
\label{grad}
\langle\nabla\varphi,\nabla\mu_{\vec{v}}\rangle=\dot{\varphi}\langle\vec{e}_{3}^{T},\vec{v}^{T}\rangle=\dot{\varphi}\left(\langle\vec{e}_{3},\vec{v}\rangle-\langle N,\vec{e}_{3}\rangle\langle N,\vec{v}\rangle \right).
\end{equation}
From \eqref{lapmu} and \eqref{grad}, we finally obtain that
\begin{equation}
\label{driftmu}
\Delta^{\varphi}\mu_{\vec{v}}=\dot{\varphi}\langle\vec{v},\vec{e}_{3}\rangle=c\dot{\varphi}.
\end{equation}
Assume, without loss of generality, that $\text{sgn}(\dot{\varphi})=1$ (the same reasoning works for the other case). In this case, the contradiction hypothesis implies that there exists $\mu^{*}=\text{sup}_{\Sigma}\mu_{\vec{v}}$. Firstly, notice that this supremum is not achieved in an interior point from \eqref{driftmu}, once $c \dot{\varphi}>0$. On the other hand, since $\Sigma$ is $\varphi$-stochastically complete, there is a divergent sequence of points $\{p_{n}\}_{n\in\mathbb{N}}\subset\Sigma$ such that
$$\mu_{\vec{v}}(p_{n})\rightarrow\mu^{*} \ \ \text{and} \ \ \Delta^{\varphi}\mu_{\vec{v}}(p_{n})\leq \frac{1}{n} \ \ \text{ for any } n\in\mathbb{N}.$$
However, taking the limit in \eqref{driftmu}, we get a \textit{contradiction} since
$$ \Delta^{\varphi}\mu_{\vec{v}} (p)\geq c\, \xi>0 \text{ on } \Sigma.$$
\end{proof}
As a consequence of Theorem C, if we replace the condition of being $\varphi$-stochastically complete by a hypothesis on the growth of $\varphi$ as we have in Corollary \ref{cor:main}, we obtain the following result. 

\begin{corollary}
\label{Cor}
A $[\varphi,\vec{e}_{3}]$-minimal surface $\Sigma$ satisfying the generalized Omori-Yau maximum principle for $\Delta^{\varphi}$, whose function $\varphi$ is a diffeomorphism such that $\vert\dot{\varphi}\vert\geq\xi>0$ outside a compact set of $\Sigma$, cannot be contained in the closed half-space $\mathcal{H}^{\varphi}_{\vec{v}}$ with $\langle\vec{v},\vec{e}_{3}\rangle>0$. In particular, if $\Sigma$ is a proper $[\varphi,\vec{e}_{3}]$-minimal surface in $\mathbb{R}^3$ whose function $\varphi$ is a diffeomorphism satisfying $0<\xi \leq \vert\dot{\varphi}(\mu(p))\vert \leq B\sqrt{\mathcal{G}(\vert p\vert)}$ outside a compact set of $\Sigma$, for real constants $\xi$ and $B$ and a smooth function $\mathcal{G}$ satisfying the conditions of Theorem \ref{thm:OY-G}, then $\Sigma$ cannot be contained in the closed half-space $\mathcal{H}^{\varphi}_{\vec{v}}$ with $\langle\vec{v},\vec{e}_{3}\rangle>0$.
\end{corollary}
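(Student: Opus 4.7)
My plan is to deduce both assertions from Theorem C, with the second part requiring only a verification that the Omori--Yau hypothesis used in the first part actually holds.

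First I would handle the general statement. The Remark following Definition 2.5 records that the Omori--Yau maximum principle for $\Delta^{\varphi}$ on $\Sigma$ implies that $\Sigma$ is $\varphi$-stochastically complete. Thus the hypotheses of Theorem C are in force and its conclusion immediately excludes containment of $\Sigma$ in $\mathcal{H}^{\varphi}_{\vec{v}}$ for any $\vec{v}$ with $\langle \vec{v},\vec{e}_3\rangle > 0$. If a self-contained argument were preferred, I would simply rerun the proof of Theorem C using an Omori--Yau sequence in place of a stochastic-completeness sequence: after assuming $\dot\varphi>0$ without loss of generality, the containment forces $\mu^{*} := \sup_{\Sigma} \mu_{\vec{v}} \leq 0$, and a sequence $\{p_n\}$ with $\mu_{\vec{v}}(p_n) \to \mu^{*}$ and $\Delta^{\varphi}\mu_{\vec{v}}(p_n) < 1/n$ contradicts the pointwise identity $\Delta^{\varphi}\mu_{\vec{v}} = c\dot\varphi$ from \eqref{driftmu} as soon as $p_n$ leaves the compact set on which $|\dot\varphi| \geq \xi$ is allowed to fail.

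Next I would dispatch the ``in particular'' clause by verifying the Omori--Yau criterion stated just after \eqref{condK}. Since $H = -\dot\varphi\,\eta$ and the restriction of $\varphi$ to $\Sigma$ depends only on the height $\mu$, one has $|H| \leq |\dot\varphi|$ and $|\nabla\varphi| = |\dot\varphi|\,|\nabla \mu| \leq |\dot\varphi|$, so the growth bound $|\dot\varphi(\mu(p))| \leq B\sqrt{\mathcal{G}(|p|)}$ immediately gives $|H(p)| + |\nabla\varphi(p)| \leq 2B\sqrt{\mathcal{G}(|p|)}$ outside a compact set. Because $\Sigma$ is proper and one may translate horizontally so that the cut locus of $0$ is disjoint from $\Sigma$, the consequence of Theorem \ref{thm:OY-G} invoked right after \eqref{condK} applies and delivers the Omori--Yau maximum principle for $\Delta^{\varphi}$ on $\Sigma$. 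The first part of the corollary then supplies the desired nonexistence.

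The only point that merits attention in this argument is ensuring that the Omori--Yau sequence $\{p_n\}$ is genuinely divergent, so that it eventually lies in the region where $|\dot\varphi| \geq \xi$ is guaranteed. This is not an obstacle: since $\Delta^{\varphi}\mu_{\vec{v}} = c\dot\varphi$ is strictly positive there, $\mu^{*}$ cannot be attained at any interior point, so any accumulation point of $\{p_n\}$ would contradict the approximate inequality $\Delta^{\varphi}\mu_{\vec{v}}(p_n) < 1/n$. This is precisely the subtlety already handled in the proof of Theorem C, so no essentially new difficulty arises and the whole statement reduces to assembling results already proved in Section 2 together with Theorem C.
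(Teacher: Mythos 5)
Your proposal is correct and follows the same route the paper intends: deduce the first assertion from Theorem C via the remark that the Omori--Yau maximum principle for $\Delta^{\varphi}$ implies $\varphi$-stochastic completeness, and deduce the second by verifying that condition \eqref{condK} furnishes the Omori--Yau principle through the criterion stated right after Theorem \ref{thm:OY-G}. The verification $\vert H\vert+\vert\nabla\varphi\vert\leq 2\vert\dot\varphi\vert\leq 2B\sqrt{\mathcal{G}(\vert p\vert)}$ and the observation that the Omori--Yau sequence must diverge (since $\Delta^{\varphi}\mu_{\vec v}=c\dot\varphi$ has a fixed sign and hence no interior supremum) are exactly the points on which the paper's argument rests.
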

As stated previously, Theorem A is then a consequence of Theorem C and Corollary \ref{cor:main}, in the sense of the previous corollary.

\begin{proof}[Proof of Theorem A]
By Corollary \ref{cor:main}, the generalized maximum principle for $\Delta^{\varphi}$ holds in $\Sigma$. Then, the rest of the proof follows from Corollary \ref{Cor}.
\end{proof}

The following corollary shows that we can allow $\dot{\varphi}$ to be unbounded from above as long as we keep a bound for $H$. In this case, if $\dot{\varphi}$ diverges along with a sequence $p_n$, then we necessarily have $\eta(p_n) \to 0$.

\begin{corollary}
\label{Cor32}
Let $\Sigma$ be a mean convex proper $[\varphi,\vec{e}_{3}]$-minimal surface in $\mathbb{R}^{3}$ with mean curvature $H$ satisfying $0<\xi_1 <  H  < \xi_2$ on $\Sigma$, for some constants $\xi_i>0,$ $i=1,2$. Then $\Sigma$ cannot be contained in the closed half-space $\mathcal{H}_{\vec{v}}^{\eta}=\{x\in\mathbb{R}^{3}:\text{sgn}(\eta)\langle x,\vec{v}\rangle\geq 0\}$ with $\langle \vec{v},\vec{e}_{3}\rangle>0$.
\end{corollary}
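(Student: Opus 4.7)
The strategy is to reduce Corollary~\ref{Cor32} to Theorem~C. The two-sided bound $0<\xi_1<H<\xi_2$ plays a dual role: it supplies a uniform lower bound for $|\dot\varphi|$ (which in Theorem~C is a direct hypothesis), and, together with properness, it triggers the Omori-Yau principle for $\Delta^{\varphi}$ via Corollary~\ref{cor:main2}.

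First I would fix the sign structure. Since $H=-\dot\varphi\,\eta>\xi_1>0$ on the connected surface $\Sigma$, neither $\dot\varphi$ nor $\eta$ vanishes; by continuity each therefore has a constant sign on $\Sigma$, with $\text{sgn}(\eta)=-\text{sgn}(\dot\varphi)$. In particular the half-spaces $\mathcal{H}^{\eta}_{\vec v}$ and $\mathcal{H}^{\varphi}_{\vec v}$ coincide, so the hypothesis of Corollary~\ref{Cor32} is formally the same as that of Theorem~C. From $|\eta|\le 1$ I would then extract the pointwise bound
$$|\dot\varphi| \;=\; \frac{H}{|\eta|} \;\ge\; H \;>\; \xi_1$$
everywhere on $\Sigma$, which is even stronger than the ``outside a compact set'' hypothesis of Theorem~C.

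Next, since $|H|<\xi_2$ is bounded and $\Sigma$ is proper, Corollary~\ref{cor:main2} yields the Omori-Yau maximum principle for $\Delta^{\varphi}$ on $\Sigma$; in particular $\Sigma$ is $\varphi$-stochastically complete. At this point the argument of Theorem~C runs verbatim: by \eqref{driftmu} one has $\Delta^{\varphi}\mu_{\vec v}=c\dot\varphi$ with $c=\langle\vec v,\vec e_3\rangle>0$; the half-space containment forces $u:=-\text{sgn}(\dot\varphi)\mu_{\vec v}$ to satisfy $\sup_\Sigma u<+\infty$; and the weak maximum principle applied to $u$ produces a sequence $p_n\in\Sigma$ with $\Delta^{\varphi}u(p_n)<1/n$, contradicting $\Delta^{\varphi}u = c|\dot\varphi|\ge c\xi_1>0$.

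The only point requiring a moment's care, and the step I would flag as the main subtlety, is that Theorem~C is stated under the hypothesis that $\varphi$ be a diffeomorphism, which is not assumed in the present corollary. Inspection of the proof of Theorem~C shows that this hypothesis is used only to guarantee that $\text{sgn}(\dot\varphi)$ is a well-defined constant on $\Sigma$; in our setting that role is played by the pointwise positivity of $H$ together with connectedness of $\Sigma$, so the reduction is legitimate and no new ideas beyond those in Theorem~C are needed.
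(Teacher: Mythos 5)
Your proof is correct and follows essentially the same route as the paper: the two-sided bound on $H$ gives the Omori--Yau principle via Corollary~\ref{cor:main2} and the estimate $|\dot\varphi|\ge H>\xi_1$, after which the argument of Theorem~C (through Corollary~\ref{Cor}) applies. You additionally spell out the sign bookkeeping --- that mean convexity forces $\text{sgn}(\dot\varphi)=-\text{sgn}(\eta)$ to be constant on the connected surface $\Sigma$, so $\mathcal{H}^{\eta}_{\vec{v}}=\mathcal{H}^{\varphi}_{\vec{v}}$ and the diffeomorphism hypothesis of Theorem~C is not actually needed here --- details the paper's one-line proof leaves implicit.
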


\begin{proof}
The conditions on $H$ ensure that $\Sigma$ satisfies the generalized Omori-Yau maximum principle for $\Delta^{\varphi}$. Furthermore, they imply that the derivative of $|\dot{\varphi}|>\xi_1$ on $\Sigma$. The result follows from Corollary \ref{Cor}.
\end{proof}

\section{Proof of Theorem B}

The proof closely follows the steps for the proof of Theorem 1.1 in \cite{CM} together with the Omori-Yau maximum principle with boundary \ref{OYboundary}. To maintain self-containment of the paper, we will describe the basic concepts and notation here. Without loss of generality, we can consider the vertical planes that determine the wedge passing through the $z$-axis. In this case, for $w_1 = (a,b,0)$ and $w_2=(a,-b,0)$, $a>0$, we define the half-space determined by $w_i$ and its boundary given by a plane $P_i$ as
$$
\begin{array}{rcl}
H_i &:=& \left\{ x \in \mathbb{R}^3\,|\, \langle x, w_i \rangle \geq 0 \right\} \\
P_i &:=& \partial H_i  = \left\{ x \in \mathbb{R}^3\,|\, \langle x, w_i \rangle = 0 \right\}. \\
\end{array}
$$
Therefore, the wedge that we are going to consider will be the intersection $H_1 \cap H_2$.

In what follows, we will construct a function in which we will apply the maximum principle. Let $P_i + R w_i$, $R>0$ the parallel displacement of $P_i$ in the direction of $w_i$. A straightforward computation shows that the intersection of such planes is given by the line $\mathcal{L}_R$ as 
$$
(P_1 + R w_1) \cap (P_2 + R w_2) = \mathcal{L}_R := \left\{ \left( \dfrac{R}{a},0,x_3 \right), x_3 \in \mathbb{R} \right\}.
$$
and the Euclidean distance of a point $x=(x_1,x_2,x_3)$ to $\mathcal{L}_R$ is given by 
$$d_R(x) = \sqrt{\left(x_1 - \dfrac{R}{a}\right)^2+x_2^2}.$$ 

Consider now the cylindrical set
\begin{equation}
\mathcal{D}_R = \left\{ x \in \mathbb{R}^3\,|\, d_R(x) \leq R \right\}.
\end{equation}
We observe that $\partial \mathcal{D}_R$ is the right circular cylinder contained in $W$ and tangent to $P_1$ and $P_2$. Let $\mathcal{V}_R$ be the connected component of $W \setminus \mathcal{D}_R$ such that $d_R$ is bounded. A second observation is that $\mathcal{V}_R$ fills in $W$ as $R$ goes to infinity.

Suppose by contradiction that $\Sigma \subset W$. If $\Sigma \cup \mathcal{V}_R \neq \varnothing$, we consider the function $f : \Sigma \rightarrow \mathbb{R}$
\begin{equation}
f(p) = \left\{
\begin{array}{ll}
d_R(p),\,\, p \in \Sigma \cap \mathcal{V}_R \\
R,\,\,\textnormal{elsewhere in}\,\, \Sigma
\end{array}
\right.
\end{equation}

To compute the gradient and Laplacian of $f$, it is enough to analyse the function $d_R$ restricted to $\Sigma$. Let $D^{\mathbb{R}^3}$ be the Euclidean connection. Since the Euclidean gradient $\nabla^{\mathbb{R}^3}$ of $d_R$ is given as
\begin{equation}
\label{gradEu}
\nabla^{\mathbb{R}^3}d_R = \dfrac{1}{d_R} \left( x_1 - \dfrac{R}{a}, x_2, 0\right) = \dfrac{\left( x_1 - \frac{R}{a} \right)}{d_R} e_1 + \dfrac{x_2}{d_R} e_2,
\end{equation}
we conclude that
$$
\begin{array}{rcl}
D^{\mathbb{R}^3}_{e_1} \left( \nabla^{\mathbb{R}^3}d_R \right) &=& -\dfrac{\left(x_1 - \frac{R}{a} \right)}{d^2_R} \nabla^{\mathbb{R}^3}d_R + \dfrac{1}{d}e_1, \\
D^{\mathbb{R}^3}_{e_2} \left( \nabla^{\mathbb{R}^3}d_R \right) &=& -\dfrac{x_2}{d^2_R} \nabla^{\mathbb{R}^3}d_R + \dfrac{1}{d}e_2, \\
D^{\mathbb{R}^3}_{e_3} \left( \nabla^{\mathbb{R}^3}d_R \right) &=& 0.
\end{array}
$$
Therefore,
$$
\begin{array}{rcl}
\hess^{\mathbb{R}^3} d_R (e_1,e_1) &=& - \dfrac{(x_1 - \frac{R}{a})^2}{d_R^3} + \dfrac{1}{d_R}, \\
\hess^{\mathbb{R}^3} d_R (e_1,e_2) &=& - \dfrac{x_2^2}{d_R^3} + \dfrac{1}{d_R}, \\
\hess^{\mathbb{R}^3} d_R (e_1,e_2) &=& - \dfrac{x_2(x_1 - \frac{R}{a})^2}{d_R^3}, \\
\hess^{\mathbb{R}^3} d_R (e_i,e_3) &=& 0. \\
\end{array}
$$
Consequently, we have $ \hess^{\mathbb{R}^3} d_R (\nabla^{\mathbb{R}^3}d_R,\nabla^{\mathbb{R}^3}d_R)=0$ and $\Delta^{\mathbb{R}^3}d_R = \dfrac{1}{d_R}$.

Let $V$ be the vector field $V = \dfrac{1}{d_R} \left( - \dfrac{x_2}{a}, x_1 - \dfrac{R}{a}, 0\right),$ obtained as a $\frac{\pi}{2}$-rotation of $\nabla^{\mathbb{R}^3}d_R$. Then $\left\{ \nabla^{\mathbb{R}^3}d_R, V, e_3 \right\}$, is an orthonormal frame in $\mathbb{R}^3$. A straightforward computation shows that
$$
\begin{array}{rcl}
\hess^{\mathbb{R}^3} d_R (V,V) &=& \dfrac{1}{d_R}, \\
\hess^{\mathbb{R}^3} d_R (V,\nabla^{\mathbb{R}^3}d_R) &=& 0.
\end{array}
$$

Now we restrict the function $d_R$ to $\Sigma$ to obtain the corresponding differential operators for $f$. Let $N$ be the unit normal vector field of $\Sigma$. We can decompose $N$ as $N = \langle N, V \rangle V + \langle N ,  \nabla^{\mathbb{R}^3}d_R \rangle \nabla^{\mathbb{R}^3}d_R + \langle N , e_3 \rangle e_3.$ Therefore,
$$
\hess^{\mathbb{R}^3} d_R (N,N) = \dfrac{\langle V, N \rangle^2}{d_R}.
$$

Moreover, we have the following identities:
\begin{equation}
\begin{array}{rcl}
\dfrac{1}{d_R} &=& \textnormal{trace} \left( \hess^{\mathbb{R}^3} d_R \right) + \dfrac{\langle V, N \rangle^2}{d_R},\\
1 &=& \langle \nabla^{\mathbb{R}^3}d_R, N \rangle^2 + \langle V, N \rangle^2 + \eta^2,
\end{array}
\end{equation}
where we use in the second equation that $\eta = \langle e_3, N \rangle$. We conclude that
\begin{equation}
\textnormal{trace}\left( \hess^{\mathbb{R}^3} d_R \right) = \dfrac{\langle \nabla^{\mathbb{R}^3}d_R, N \rangle^2 + \eta^2}{d_R}. \label{tr-hessian}
\end{equation}

Now, we compute the drift Laplacian $\Delta^\varphi$ of $f$. On the one hand, the Laplacian of $f$ on $\Sigma$ is given by
$$
\begin{array}{rcl}
\Delta f &=& \textnormal{trace} \left( \hess^{\mathbb{R}^3} d_R \right) + \langle \nabla^{\mathbb{R}^3}d_R, \vec{H} \rangle,\\
&=& \dfrac{1-\vert\nabla f\vert^2 + \eta^2}{d_R} + \langle \nabla^{\mathbb{R}^3}d_R, \dot{\varphi} \eta N \rangle, \\
&=& \dfrac{1-\vert\nabla f\vert^2 + \eta^2}{d_R} + \dot{\varphi} \eta \langle \nabla^{\mathbb{R}^3}d_R,  N \rangle, \\
\end{array}
$$

On the other hand, since $\nabla f =  \nabla^{\mathbb{R}^3} d_R - \langle \nabla^{\mathbb{R}^3} d_R, N \rangle N$, we get
$$
\begin{array}{rcl}
\langle \nabla \varphi, \nabla f \rangle  &=& \dot{\varphi} \langle e_3^T, \nabla^{\mathbb{R}^3}d_R \rangle, \\
&=& - \dot{\varphi} \eta \langle N, \nabla^{\mathbb{R}^3}d_R \rangle
\end{array}
$$

Therefore,

\begin{equation}
\Delta^{\varphi} f = \dfrac{1-\vert\nabla f\vert^2 + \eta^2}{d_R} \geq \dfrac{1-\vert\nabla f\vert^2}{d_R} \label{estimate-laplacian}
\end{equation}

Let us consider the piece of $\Sigma\cap\mathcal{V}_{R}$ that is contained in $\mathcal{V}_R$ with boundary $\partial\Sigma=\Sigma\cap\partial\mathcal{V}_{R}$. We can take $R$ large enough, so that $\Sigma \cap \mathcal{V}_R \neq \varnothing$. There are two possibilities:

If $\Sigma \cap \mathcal{V}_R$ is compact, then we can apply the strong maximum principle to get a contradiction with \eqref{estimate-laplacian}. On the other hand, if $\Sigma \cap \mathcal{V}_R$ is not-compact since $f^{*}=\sup_{\Sigma} f=\sup_{\Sigma\cap\mathcal{V}_{R}}f>R,$ we can apply the Omori-Yau maximum principle with boundary \ref{OYboundary} to obtain a contradiction with \eqref{estimate-laplacian} due to the existence of a sequence of points $\{p_{n}\}_{n}\subset\Sigma$ such that 
$$f(p_{n})\rightarrow f^{*} \ \ \vert\nabla f(p_{n})\vert\rightarrow 0 \ \ \text{and} \ \ \Delta^{\varphi}f\leq\frac{1}{n} \ \ \forall n\in\mathbb{N}.$$

$\hfill\square$

Notice that if we can assure the $\varphi$-stochastically completeness of an open subset of $\Sigma\backslash\partial\Sigma$ containing the supremum $f^*$ in its interior, we can prove the following consequence for $[\varphi,\vec{e}_{3}]$-minimal surfaces with $\dot{\varphi}$ a bounded function.
\begin{corollary}
Under the assumption as above. If $\Sigma$ is locally convex in $\Sigma\cap\mathcal{V}_{R}$ with $\text{inf}_{\Sigma\cap\mathcal{V}_{R}}\vert\mathcal{S}\vert^2\geq\epsilon$, for some $\epsilon>0$, then $\Sigma$ cannot be contained in $H_{1}\cap H_{2}$.
\end{corollary}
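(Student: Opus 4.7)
I argue by contradiction, assuming $\Sigma\subset H_1\cap H_2$, and retain the setup from the proof of Theorem~B. For $R$ sufficiently large, $\Sigma\cap\mathcal{V}_R\neq\varnothing$, and the function $f$, equal to $d_R$ on $\Sigma\cap\mathcal{V}_R$ and to $R$ elsewhere, satisfies $f^{*}=\sup_\Sigma f>R$ together with the pointwise estimate $\Delta^{\varphi}f\geq (1-|\nabla f|^2+\eta^2)/d_R\geq 0$ on the open set $\{f>R\}\subset\Sigma$. Since $d_R$ is uniformly bounded on $\mathcal{V}_R$, this may be rewritten as $\Delta^{\varphi}f\geq c_R\bigl(\langle\nabla^{\mathbb{R}^3}d_R,N\rangle^2+\eta^2\bigr)$ for some $c_R>0$.

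The strategy is to replace the Omori-Yau-with-boundary step of the proof of Theorem~B by a direct application of the weak maximum principle for $\Delta^{\varphi}$ on a suitable open subset $U\subset\Sigma\setminus\partial\Sigma$ containing the supremum in its interior. The two hypotheses are precisely what is needed to guarantee $\varphi$-stochastic completeness of such a $U$. Indeed, boundedness of $\dot{\varphi}$ combined with $H=-\dot{\varphi}\eta$ gives $|H|\leq C$; local convexity $k_1 k_2\geq 0$ then forces $|\mathcal{S}|^2\leq(k_1+k_2)^2=4H^2\leq 4C^2$, so together with $\inf|\mathcal{S}|^2\geq\epsilon$ the principal curvatures are uniformly pinched on $\Sigma\cap\mathcal{V}_R$. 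This gives uniform intrinsic control (non-negative bounded sectional curvature, and a lower bound for the Bakry-Emery Ricci tensor through the identity $\hess_\Sigma\varphi(X,Y)=\ddot{\varphi}\langle X,\vec{e}_3^T\rangle\langle Y,\vec{e}_3^T\rangle-H\,\mathcal{S}(X,Y)$). These bounds are enough to produce a Khas'minskii-type test function for $\Delta^{\varphi}$ on $U:=\{f>R+\delta\}$, for small $\delta>0$, via Theorem~\ref{criteria}, yielding the required $\varphi$-stochastic completeness.

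Applying the weak maximum principle to $f$ on $U$, I obtain a sequence $\{p_n\}\subset U$ with $f(p_n)\to f^{*}$ and $\Delta^{\varphi}f(p_n)<1/n$; the pointwise lower bound above then forces $\langle\nabla^{\mathbb{R}^3}d_R(p_n),N(p_n)\rangle^2+\eta(p_n)^2\to 0$, so that $\eta(p_n)\to 0$ and $N(p_n)\to\pm V(p_n)$ along the sequence. I expect the main obstacle to be converting this limiting geometric behaviour into a contradiction by means of the lower bound $|\mathcal{S}|^2\geq\epsilon$: the tangent plane to $\Sigma$ at $p_n$ asymptotically coincides with the tangent plane to the cylinder $\partial\mathcal{D}_R$ at the foot of the perpendicular from $p_n$ to the axis $\mathcal{L}_R$, and local convexity combined with the pinching estimate provides a tangent-sphere barrier of radius at most $\sqrt{2/\epsilon}$ placed on the side of $\mathcal{D}_R$, which the surface must eventually cross, contradicting the confinement $\Sigma\subset\mathcal{V}_R\cup\mathcal{D}_R^c\subset W$ for $R$ large enough.
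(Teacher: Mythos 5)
Your proposal gets the setup right — same function $f$, same estimate \eqref{estimate-laplacian}, and the same conclusion from the maximum principle sequence that $\langle\nabla^{\mathbb{R}^3}d_R,N\rangle^2+\eta^2\to 0$, hence $\eta(p_n)\to 0$ and $N(p_n)\to\pm V(p_n)$. But there are two substantial gaps.

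\textbf{First, and most importantly, you do not carry out the final contradiction.} You describe a ``tangent-sphere barrier'' argument, explicitly flag it as ``the main obstacle'', and leave it as a sketch. The paper's argument at this point is much more direct and you miss it entirely: since $\dot\varphi$ is bounded (the hypothesis in force here, from the remark preceding the corollary) and $H=-\dot\varphi\eta$, the limit $\eta(p_n)\to 0$ forces $H(p_n)\to 0$. Local convexity means $K=k_1k_2\geq 0$, so $k_1$ and $k_2$ have the same sign at $p_n$, hence $|k_1(p_n)|+|k_2(p_n)|=|H(p_n)|\to 0$ and both principal curvatures tend to zero individually. This gives $\vert\mathcal{S}\vert^2(p_n)=k_1^2(p_n)+k_2^2(p_n)\to 0$, directly contradicting $\inf_{\Sigma\cap\mathcal{V}_R}\vert\mathcal{S}\vert^2\geq\epsilon$. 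No barrier, no extrinsic comparison — just a pointwise algebraic observation about principal curvatures. Moreover, your barrier sketch rests on a misreading of the geometry: $N(p_n)\to\pm V(p_n)$ with $V$ tangent to the cylinder $\partial\mathcal{D}_R$ means the tangent plane to $\Sigma$ at $p_n$ asymptotically spans $\{\nabla^{\mathbb{R}^3}d_R,\,\vec{e}_3\}$, the radial-vertical plane, whereas the tangent plane to $\partial\mathcal{D}_R$ spans $\{V,\vec{e}_3\}$. These two planes do \emph{not} asymptotically coincide; they are transverse (sharing only the $\vec{e}_3$ direction), so the comparison you gesture at would not go through as stated.

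\textbf{Second}, you attempt to \emph{derive} the $\varphi$-stochastic completeness of a neighbourhood of the supremum from the curvature pinching, via a Khas'minskii test function. The paper does not do this: the remark immediately preceding the corollary simply \emph{assumes} stochastic completeness of an open subset of $\Sigma\setminus\partial\Sigma$ containing $f^*$ in its interior (together with $\dot\varphi$ bounded), and the curvature hypotheses are used only in the final contradiction step. Your derivation is not carried out in detail either — it asserts that bounded sectional curvature and a Bakry--\'Emery Ricci bound ``are enough to produce a Khas'minskii-type test function on $U$'', but $U=\{f>R+\delta\}$ is typically non-compactly bounded away from $\partial\Sigma$ and not geodesically complete, so this step needs a real argument, not just an appeal to Theorem~\ref{criteria}. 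If you want a proof in the spirit of the paper, take stochastic completeness as part of the hypotheses and concentrate the curvature assumptions on the endgame, which is where they are actually used.
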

\begin{proof}
Using the same argument as in Theorem B, we get $$f^{*}=\sup_{\Sigma} f=\sup_{\Sigma\cap\mathcal{V}_{R}}f>R$$ and the existence of a sequence $\{p_{n}\}_{n}\subset\Sigma$ such that 
$$f(p_{n})\rightarrow f^{*} \ \ \text{and} \ \ \Delta^{\varphi}f\leq\frac{1}{n} \ \ \forall n\in\mathbb{N}.$$
Notice that, from \eqref{estimate-laplacian} together with the maximum principle, these sequences $\{p_{n}\}$ must be divergent and, moreover, $d_{\Sigma}(p_{n},\partial(\Sigma\cap\mathcal{V}_{R}))>\epsilon$ uniformly for some $\epsilon>0$ since $f^{*}>R$. Now, suppose that there exists a subsequence of points $\{p_{n}\}$ (denoted in the same way) such that $\vert \nabla f\vert(p_{n})\rightarrow 1$ and $\eta^2(p_{n})\rightarrow 0$. Otherwise, we get a contradiction with \eqref{estimate-laplacian}. In the following, we assume that $\vert\nabla f\vert (p_{n})\rightarrow 1$ and $\eta^2(p_{n})\rightarrow 0$ as $n\rightarrow+\infty$. Hence, from our assumptions, the mean curvature $H(p_{n})\rightarrow 0$ as $n\rightarrow +\infty$. Since $K(p_{n})\geq 0$, each principal curvature $k_{i}(p_{n})\rightarrow0$ as $n\rightarrow+\infty$, leading to a contradiction.
\end{proof}

In general, we can relax the hypothesis on the function $\varphi$ and the principal curvatures if we assume that the strong maximum principle for $\Delta^{\varphi}$ holds in any open subset of $\Sigma$.

\begin{corollary}
\label{Cor2}
There is no $[\varphi,\vec{e}_{3}]$-minimal surface $\Sigma$ contained in two transverse vertical half-spaces of $\mathbb{R}^3$ if the Omori-Yau maximum principle for $\Delta^{\varphi}$ is valid in any open subset of $\Sigma$ with boundary.
\end{corollary}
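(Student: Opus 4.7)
The plan is to replay the proof of Theorem B almost verbatim, but to replace the explicit appeal to Theorem \ref{OYboundary} (which required the at-most-quadratic growth of $\varphi$ in order to construct an auxiliary $\psi_a$ with controlled gradient and drift Laplacian) by the abstract hypothesis now assumed in the statement. Argue by contradiction and suppose $\Sigma \subset H_1 \cap H_2$. With the normalizations used before, reintroduce the line $\mathcal{L}_R$, the distance function $d_R$, the cylindrical set $\mathcal{D}_R$, and the connected component $\mathcal{V}_R$ of $W \setminus \mathcal{D}_R$ on which $d_R$ is bounded.

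Pick $R$ large enough that $U := \Sigma \cap \mathcal{V}_R$ is nonempty, and consider $f := d_R$ on $U$. This $U$ is an open subset of $\Sigma$ whose boundary $\partial U = \Sigma \cap \partial \mathcal{V}_R$ is contained in the level set $\{f = R\}$. Since $\mathcal{V}_R$ fills in $W$ as $R \to \infty$, by taking $R$ sufficiently large one has $f^* := \sup_{U} f > R$, and hence $\sup_{\partial U} f < f^*$. The calculation carried out in the proof of Theorem B is purely local, so it yields on $U$ the same estimate
\begin{equation*}
\Delta^{\varphi} f \;\geq\; \frac{1 - |\nabla f|^2 + \eta^2}{d_R} \;\geq\; \frac{1 - |\nabla f|^2}{d_R}.
\end{equation*}

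By the hypothesis of the corollary, the Omori--Yau maximum principle for $\Delta^{\varphi}$ applies to the open subset $U$ of $\Sigma$ with boundary $\partial U$. It therefore produces a sequence $\{p_n\} \subset U$ with $f(p_n) \to f^*$, $|\nabla f(p_n)| \to 0$ and $\Delta^{\varphi} f(p_n) \leq 1/n$. Passing to the limit in the displayed inequality gives $0 \geq 1/f^* > 0$, the desired contradiction.

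The main (and really only) subtlety I expect is bookkeeping: one must check that the hypothesis "the Omori--Yau maximum principle holds in any open subset of $\Sigma$ with boundary" is being applied to a set $U$ where the supremum of $f$ is genuinely interior, rather than attained on $\partial U$; this is precisely guaranteed by the inequality $\sup_{\partial U} f = R < f^*$, which in turn is secured once $R$ is chosen large enough that $\Sigma$ penetrates strictly into $\mathcal{V}_R$. Given this, the structure of the argument is identical to that of Theorem B, but with the growth assumption on $\varphi$ replaced by the weaker, more axiomatic hypothesis on the maximum principle itself.
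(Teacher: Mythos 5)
Your proof is correct and follows essentially the same route the paper intends for this corollary, namely: run the argument of Theorem B verbatim, but replace the concrete invocation of Theorem~\ref{OYboundary} (which used the quadratic growth of $\varphi$ to construct the Khas'minskii-type exhaustion $\psi_a$) by the abstract hypothesis that the Omori--Yau principle with boundary holds on open subsets of $\Sigma$. The paper states this corollary without proof precisely because the argument is the same; your write-up fills in exactly the bookkeeping one would expect, including the key observation that $\sup_{\partial U} f = R < f^*$ holds automatically once $\Sigma\cap\mathcal{V}_R\neq\varnothing$ (since $\mathcal{V}_R$ is open), so the maximum is genuinely interior and the boundary version of the principle applies.
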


Notice that the existence of a proper $[\varphi,\vec{e}_{3}]$-minimal surface in a domain determined by two transverse vertical planes only depends on whether the Omori-Yau maximum principle with boundary holds.

From the proof of Theorem B, Theorem \ref{OYboundary} is sharp in the sense of the quadratic growth of $\varphi$. In fact, we have already discussed in the Introduction that Theorem B is sharp on the growth of $\varphi$. Such a condition is used only for the application of Theorem \ref{OYboundary}. Therefore, if Theorem \ref{OYboundary} is true when $\varphi$ has a higher growth, then Theorem B would also be true in this case, which is a contradiction.

\section{Proof of Theorem D}

Suppose that the second statement does not occur, and argue by contradiction. We only prove the case that there exists a $\varphi$-stochastically complete $[\varphi,\vec{e}_{3}]$-minimal surface which is not a vertical plane $\Sigma$ in the half-space
$$\{x\in\mathbb{R}^{3}: \langle x,\vec{v}\rangle>0\} \ \ \text{with} \ \ \langle\vec{v},\vec{e}_{3}\rangle=0.$$
Consider the following smooth function $\psi:\Sigma\rightarrow\mathbb{R}$ given by
$$\psi(p)=\log(a+\langle p,\vec{v}\rangle) \ \ \text{for any} \ \ p\in\Sigma$$
and $a>0$ is a fixed real number. We can ensure the existence of $\psi_{*}=\text{inf}_{\Sigma}\psi$. For the other case in which $\Sigma$ is contained in $\{x\in\mathbb{R}^{3}: \langle x,\vec{v}\rangle<0\}$ the following proof also works using the function
$$\psi(p)=\log(a-\langle p,\vec{v}\rangle) \ \ \text{for any} \ \ p\in\Sigma$$

Now, fix any point $p\in\Sigma$ and consider any vector field $X\in T_{p}\Sigma$, then
\begin{equation}
\label{gradpsi}
d\psi_{p}(X)=\langle X,\nabla\psi(p)\rangle=\frac{\langle X,\vec{v}^{T}\rangle}{a+\langle p,\vec{v}\rangle}.
\end{equation}
Then, for any orthonormal frame $\{v_{i}\}_{i=1,2}$ of $T_{p}\Sigma$, we obtain
\begin{equation}
\label{eq3}
\langle D_{v_{i}}\nabla\psi,v_{i}\rangle=-\frac{\langle v_{i},v^{T}\rangle^{2}}{(a+\langle p,\vec{v}\rangle)^{2}}+\frac{1}{a+\langle p,\vec{v}\rangle}\langle D_{v_{i}}v^{T},v_{i}\rangle.
\end{equation}
Moreover, since $\langle\vec{v},\vec{e}_{3}\rangle=0$, we get that
\begin{equation}
\label{eq4}
\langle\nabla\varphi,\nabla\psi\rangle=-\frac{\dot{\varphi}}{a+\langle p,\vec{v}\rangle}\langle N,\vec{e}_{3}\rangle\langle N,\vec{v}\rangle.
\end{equation}
Consequently, from \eqref{eq3} and \eqref{eq4}, the following hold
\begin{equation}
\label{lap2}
\Delta^{\varphi}\psi=-\vert\nabla\psi\vert^{2}
\end{equation}
On the one hand, $\psi$ does not reach its infimum in an interior point of $\Sigma$. Otherwise, from \eqref{gradpsi} there exists a point $p\in\Sigma$ such that $\eta (p)=0$, but this fact is impossible. On the other hand, since $\Sigma$ is $\varphi$-stochastically complete and there exists $\psi_{*}=\text{inf}_{\Sigma}\psi$, we obtain the existence of a divergent sequence of points $\{p_{n}\}_{n\in\mathbb{N}}\subset\Sigma$ such that
$$\psi(p_{n})\rightarrow\psi_{*} \ \ \text{and} \ \ \Delta^{\varphi}\psi(p_{n})\geq-\frac{1}{n} \ \ \text{for any} \ \ n\in\mathbb{N}.$$
However, from \eqref{lap2}, we get $\vert\nabla\psi\vert (p_ {n})\vert \rightarrow 0$ as $n\rightarrow+\infty$ because
$$-\frac{1}{n}\leq\Delta^{\varphi}\psi(p_{n})=-\vert\nabla\psi\vert^{2}(p_{n})\leq 0 \ \ \text{for any} \ \ n\in\mathbb{N}.$$
Notice that $(a+\langle p_ {n},\vec{v}\rangle)$ is bounded for any $n\in\mathbb{N}$ since $\psi(p_{n})\rightarrow \psi_{*}$. Thus, $v^{T}(p_{n})\rightarrow 0$. But in this case, up to subsequence, we find a sequence of points $\{p_ {n}\}\subset\Sigma$ such that $\eta(p_{n})\rightarrow 0$, which leads to a contradiction.
$\hfill\square$

\begin{corollary}
\label{Cor1}
If $\Sigma$ is a $[\varphi,\vec{e}_{3}]$-minimal surface satisfying the generalized Omori-Yau maximum principle for $\Delta^{\varphi}$ which is contained in the half-space $\mathcal{H}_{\vec{v}}=\{x\in\mathbb{R}^{3}: \langle x,\vec{v}\rangle\leq 0\}$ for any vector $\vec{v}\in\mathbb{R}^3$ such that $\langle\vec{v},\vec{e}_{3}\rangle=0.$ then one of the following holds
\begin{enumerate}
\item $\Sigma$ is a plane parallel to $\partial \mathcal{H}_{\vec{v}}$.
\item $\Sigma$ is not a plane parallel to $\partial \mathcal{H}_{\vec{v}}$, but there exists a divergent sequence $\{p_{n}\}_{n\in\mathbb{N}}\subset\Sigma$ such that $\eta(p_{n})\rightarrow 0$ as $n\rightarrow+\infty$.
\end{enumerate}
In particular, if $\Sigma$ is a proper $[\varphi,\vec{e}_{3}]$-minimal surface in $\mathbb{R}^3$ whose function $\varphi$ satisfies the condition \eqref{condK} , then the previous statement holds. 
\end{corollary}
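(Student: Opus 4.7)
The plan is to obtain this corollary as an immediate consequence of Theorem D, using the fact that the generalized Omori-Yau maximum principle for $\Delta^{\varphi}$ is strictly stronger than the corresponding weak maximum principle. Indeed, as recorded in the remark following the definition of the Omori-Yau maximum principle (which rests on \cite[Theorem 3.2]{AMR}), whenever Omori-Yau holds for $\Delta^{\varphi}$ on $\Sigma$, the surface is automatically $\varphi$-stochastically complete. Since $\Sigma$ is assumed to lie in the half-space $\mathcal{H}_{\vec{v}}$ with $\langle\vec{v},\vec{e}_{3}\rangle = 0$, all the hypotheses of Theorem D are then in force, and its dichotomy transfers verbatim to give conclusions (i) and (ii).

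For the ``in particular'' clause, the extra ingredient needed is a way to convert properness together with the growth bound \eqref{condK} into the Omori-Yau maximum principle for $\Delta^{\varphi}$. This is precisely the content of the corollary established in Section~\ref{sec-maximum-principle} right after Theorem~\ref{thm:OY-G}: under \eqref{condK} one can apply Theorem~\ref{thm:OY-G} with the test function built from the squared distance $r^{2}$ and the appropriate comparison function $\mathcal{G}$, yielding the Omori-Yau principle for $\Delta^{\varphi}$ on $\Sigma$. Once this implication is invoked, the first part of the present statement produces the desired dichotomy.

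No essential obstacle arises, as the whole argument is a concatenation of implications already proved in Section~\ref{sec-maximum-principle} and in the proof of Theorem D. The only minor care needed is to observe that the vector $\vec{v}$ appearing in Theorem D is an arbitrary horizontal vector, so the statement carries over without modification to the present setting, and that ``plane parallel to $\partial\mathcal{H}_{\vec{v}}$'' is the unique geometric configuration in which the argument of Theorem D (which produces a point where $\eta$ vanishes from an interior extremum of $\log(a + \langle p,\vec v\rangle)$) can fail to yield a divergent sequence with $\eta(p_n)\to 0$.
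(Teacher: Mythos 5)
Your proposal is correct and follows exactly the route the paper intends: Corollary~\ref{Cor1} is stated without an explicit proof precisely because it is the concatenation of (a) the remark after the definition of the Omori--Yau principle (citing \cite[Theorem 3.2]{AMR}) that Omori--Yau for $\Delta^{\varphi}$ implies $\varphi$-stochastic completeness, (b) Theorem~D, and (c) for the ``in particular'' clause, the Section~\ref{sec-maximum-principle} corollary that properness together with \eqref{condK} yields the Omori--Yau principle for $\Delta^{\varphi}$. All three ingredients are correctly identified and assembled in your argument.
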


\begin{corollary}
\label{corEk}
 Let $\Sigma$ be a complete locally vertical $[\varphi,\vec{e}_{3}]$-minimal graph such that the principal curvatures satisfy $k_{i}>\xi>0$, with $i=1,2$,outside a compact set of $\Sigma$. If $\Sigma$ is $\varphi$-stochastically complete, then $\Sigma$ cannot be contained in $\mathcal{H}_{\vec{v}}$ with $\langle\vec{v},\vec{e}_{3}\rangle=0$.
\end{corollary}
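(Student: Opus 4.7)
The plan is to argue by contradiction, applying Theorem \ref{verticalcase} to reduce to a single residual case and then killing that case by combining Proposition \ref{ekeland} with the uniform lower bound on the principal curvatures.

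Suppose, for contradiction, that $\Sigma\subset \mathcal{H}_{\vec{v}}$ with $\langle\vec{v},\vec{e}_{3}\rangle=0$. Since $\Sigma$ is $\varphi$-stochastically complete, Theorem \ref{verticalcase} yields two alternatives: either $\Sigma$ is a vertical plane, or there exists a divergent sequence $\{p_{n}\}\subset\Sigma$ with $\eta(p_{n})\to 0$. The first alternative is immediate to discard: on a vertical plane all principal curvatures vanish, violating the assumption $k_{i}>\xi>0$ outside a compact set. So I focus on the second alternative.

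Because $\Sigma$ is a connected locally vertical graph, the angle function $\eta$ has constant sign, and up to reversing the orientation I assume $\eta>0$ on $\Sigma$. The sequence $\{p_{n}\}$ then forces $\eta_{*}:=\inf_{\Sigma}\eta=0$. Applying the Ekeland-type Proposition \ref{ekeland} (whence the label \emph{corEk}) to the bounded $C^{1}$ function $u=-\eta$ on the complete manifold $\Sigma$, with starting sequence $y_{n}=p_{n}$, I obtain $\{x_{n}\}\subset\Sigma$ such that
\[
\eta(x_{n})\to 0,\qquad |\nabla\eta(x_{n})|\to 0,\qquad d(x_{n},p_{n})\to 0.
\]
Since $\{p_{n}\}$ is divergent and $d(x_{n},p_{n})\to 0$, the sequence $\{x_{n}\}$ is also divergent; hence for all sufficiently large $n$ the point $x_{n}$ lies outside the compact set where the curvature condition is allowed to fail, giving $k_{i}(x_{n})>\xi$.

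The contradiction comes from the identity $\nabla\eta=-S(\vec{e}_{3}^{T})$, where $S$ denotes the shape operator of $\Sigma$ (this follows directly from $X\eta=\langle D_{X}N,\vec{e}_{3}\rangle=-\langle S(X),\vec{e}_{3}^{T}\rangle$ for tangent $X$, using the symmetry of $S$). Since $S$ is symmetric with both eigenvalues exceeding $\xi$ at $x_{n}$, one has $|S(X)|\geq \xi|X|$ for every tangent $X$; combining this with $|\vec{e}_{3}^{T}|^{2}=1-\eta^{2}$, I obtain
\[
|\nabla\eta(x_{n})|=|S(\vec{e}_{3}^{T}(x_{n}))|\;\geq\;\xi\,|\vec{e}_{3}^{T}(x_{n})|\;=\;\xi\sqrt{1-\eta^{2}(x_{n})}\;\longrightarrow\;\xi>0,
\]
which contradicts $|\nabla\eta(x_{n})|\to 0$. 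The only subtlety is verifying that the local-graph hypothesis together with connectedness pins down a global sign for $\eta$, so that $\eta_{*}=0$ can be read off from the sequence produced by Theorem \ref{verticalcase}; the remainder is a clean application of Proposition \ref{ekeland} and the shape-operator inequality above.
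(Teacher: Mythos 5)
Your proof is correct and follows essentially the same route as the paper's: both argue by contradiction, invoke Theorem D to produce a divergent sequence with $\eta(p_n)\to 0=\inf_\Sigma\eta$, apply the Ekeland-type Proposition \ref{ekeland} to obtain a nearby sequence with $|\nabla\eta|=|S(\vec e_3^{\,T})|\to 0$, and then use $|\vec e_3^{\,T}|^2=1-\eta^2\to 1$ together with the shape operator to contradict the uniform lower bound on the principal curvatures. If anything, your write-up is slightly more careful than the paper's, spelling out the global sign of $\eta$ from the locally-vertical-graph hypothesis and using $d(x_n,p_n)\to 0$ to ensure the Ekeland sequence escapes the exceptional compact set.
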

\begin{proof}
Argue by contradiction. Suppose that $\Sigma$ is contained in the closed half-space $\mathcal{H}_{\vec{v}}$ with $\langle\vec{v},\vec{e}_{3}\rangle=0$ under the same conditions. Assume that $\eta\geq 0$ (the same argument works if $\eta\leq 0$). Notice that $\Sigma$ is not a vertical plane since $K>0$. Then, there exists a sequence of points $\{p_{n}\}_{n\in\mathbb{N}}\subset\Sigma$ such that $\eta(p_{n})\rightarrow 0=\text{inf}_{\Sigma}\eta$. From Proposition \ref{ekeland} together with the completeness, there exists a sequence $\{q_{n}\}_{n\in\mathbb{N}}\subset\Sigma$ such that $\vert\nabla\eta(q_{n})\vert=\vert\textbf{S}\vec{e}_{3}^{T}(q_{n})\vert\rightarrow 0$, where $\textbf{S}$ is the shape operator of $\Sigma$. Consequently, up to subsequence, one of the principal curvatures $k_{i}$ satisfies $k_{i}(q_{n})\rightarrow 0$ as $n\rightarrow +\infty$ since $\vert\nabla\mu\vert(q_{n})\rightarrow 1$, getting to a contradiction.
\end{proof}
\begin{remark}
The convexity of the Gauss curvature is a natural property in this family of surfaces. For example, see the work of J. Spruck and L. Xiao \cite{SX} where they showed the convexity of the complete mean convex translator in $\mathbb{R}^3$. On the other hand, we proved the convexity for a family of complete stable $[\varphi,\vec{e}_{3}]$-minimal surfaces in \cite{MMJ}.
\end{remark}
Finally, as a nice application of Theorems C and D, we prove a strong half-space result for complete properly embedded $[\varphi,\vec{e}_{3}]$-minimal surfaces in $\mathbb{R}^{3}$. Namely,

\begin{theorem}[Strong half-space Theorem] Let $\Sigma_{1}$ and $\Sigma_{2}$ be complete mean convex properly embedded $[\varphi,\vec{e}_{3}]$-minimal surfaces in $\mathbb{R}^{3}$ and whose function $\varphi$ is a diffeomorphism such that $\vert\dot\varphi\vert>\xi$ for a real constant $\xi>0$ outside a compact set of $\Sigma$. If $\Sigma_{1}$ and $\Sigma_{2}$ are $\varphi$-stochastically complete, then one of the following holds:
\begin{enumerate}
\item  $\Sigma_{1}$ and $\Sigma_{2}$ intersect non-trivially,
\item $\Sigma_1$ and $\Sigma_2$ are contained in distinct half-spaces determined by a vertical plane. Moreover, on each $\Sigma_{i}$, there exists a sequence of points such that one of the main curvatures converges to zero along the sequence.
\end{enumerate}
\end{theorem}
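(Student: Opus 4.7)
Our plan is to argue by contradiction: assume $\Sigma_{1}\cap\Sigma_{2}=\emptyset$ and derive conclusion (2). The argument has three stages, combining Theorem~C, Theorem~D, a Meeks--Simon--Yau barrier construction in Euclidean $\mathbb{R}^{3}$, and Fischer--Colbrie--Schoen.

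First, each $\Sigma_{i}$ satisfies the hypotheses of Theorem~C (properly embedded, $\varphi$-stochastically complete, $\varphi$ a diffeomorphism with $|\dot\varphi|>\xi$ outside a compact set). Hence neither can lie in any closed half-space $\mathcal{H}^{\varphi}_{\vec{v}}$ with $\langle\vec{v},\vec{e}_{3}\rangle>0$. In particular, neither surface can be contained in a closed half-space determined by a non-vertical plane, regardless of the sign of $\dot\varphi$.

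Second, we produce a vertical separating plane. Since $\Sigma_{1}$ and $\Sigma_{2}$ are properly embedded, closed, Euclidean mean-convex and disjoint in $\mathbb{R}^{3}$, any component $W$ of $\mathbb{R}^{3}\setminus(\Sigma_{1}\cup\Sigma_{2})$ whose boundary meets both surfaces admits, via an area-minimising barrier argument, a complete, stable, properly embedded \emph{Euclidean} minimal surface inside $\overline W$, by \cite[Theorem~8]{MSY} applied in $\mathbb{R}^{3}$ with its flat metric (whose Ricci curvature vanishes, hence is non-negative); the Euclidean mean-convexity of the $\Sigma_{i}$ supplies exactly the barrier hypothesis. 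By Fischer--Colbrie--Schoen~\cite{FCS}, this stable complete Euclidean minimal surface is an affine plane $P$. Consequently $\Sigma_{1}$ and $\Sigma_{2}$ lie in the two closed half-spaces determined by $P$. By the first step, $P$ must be vertical, for otherwise one of the $\Sigma_{i}$ would be contained in a half-space $\mathcal{H}^{\varphi}_{\vec{v}}$ with $\langle\vec{v},\vec{e}_{3}\rangle>0$, contradicting Theorem~C.

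Third, we extract the principal-curvature sequence on each $\Sigma_{i}$. With $\Sigma_{i}$ now contained in a closed vertical half-space, Theorem~D applies: either $\Sigma_{i}$ is a plane parallel to $P$ (whose principal curvatures vanish identically, so the conclusion is trivial), or there exists a divergent sequence $\{p_{n}^{(i)}\}\subset\Sigma_{i}$ with $\eta(p_{n}^{(i)})\to 0$. In the latter case we imitate the proof of Corollary~\ref{corEk}: applying the Ekeland-type Proposition~\ref{ekeland} to $\eta$ (or $-\eta$) produces a sequence $\{q_{n}^{(i)}\}\subset\Sigma_{i}$ with $\eta(q_{n}^{(i)})\to 0$ and $|\nabla\eta(q_{n}^{(i)})|\to 0$. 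Using the identity $\nabla\eta=-\textbf{S}\,\vec{e}_{3}^{T}$ together with $|\vec{e}_{3}^{T}(q_{n}^{(i)})|^{2}=1-\eta(q_{n}^{(i)})^{2}\to 1$, we conclude that some principal curvature of $\Sigma_{i}$ tends to zero along $\{q_{n}^{(i)}\}$, completing the proof.

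The main obstacle is the second stage: invoking Meeks--Simon--Yau and Fischer--Colbrie--Schoen in the weighted setting. The delicate point is that although $\Sigma_{1},\Sigma_{2}$ are only $[\varphi,\vec{e}_{3}]$-minimal and not Euclidean minimal, their Euclidean mean-convexity furnishes the barrier property required by \cite[Theorem~8]{MSY}; the surface produced sits in $\mathbb{R}^{3}$ with its flat metric, so Fischer--Colbrie--Schoen applies to yield the separating plane, which Theorem~C then pins down as vertical.
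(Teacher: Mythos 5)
Your proof follows the paper's route exactly: the Meeks--Simon--Yau barrier argument (applied in flat $\mathbb{R}^3$, which is Ricci-flat, with the Euclidean mean-convexity of $\Sigma_1$, $\Sigma_2$ furnishing the barrier condition) plus Fischer--Colbrie--Schoen to obtain a separating plane $P$, Theorem~C to force $P$ to be vertical, Theorem~D to produce divergent sequences with $\eta\to 0$ on each $\Sigma_i$, and Proposition~\ref{ekeland} (Ekeland's variational principle) to upgrade this to $|\nabla\eta|\to 0$ and hence a vanishing principal curvature. The one step you leave unjustified is why Proposition~\ref{ekeland} applies to $\eta$ or $-\eta$: the proposition requires a sequence converging to the \emph{supremum} of the function, so you need $0$ to be either $\sup_{\Sigma_i}\eta$ or $\inf_{\Sigma_i}\eta$. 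This is precisely where the mean-convexity hypothesis is actually used. Because $\varphi$ is a diffeomorphism, $\dot\varphi$ has a constant sign, and then the $[\varphi,\vec e_3]$-minimality relation $H=-\dot\varphi\,\eta$ together with $H\geq 0$ forces $\eta$ to be one-signed on each $\Sigma_i$; combined with the sequence from Theorem~D with $\eta(p_n)\to 0$, this shows that $0$ is the supremum or the infimum, and only then does the Ekeland argument go through. The paper records exactly this observation (``since each $\Sigma_i$ is mean convex and $\dot\varphi\neq 0$, each angle function reaches either its supremum or its infimum''), while you use mean-convexity only for the MSY barrier; you should state this second use as well.
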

\begin{proof}
Suppose that case $1$ does not occur. Let $N$ be the geodesic closure  of the component of $\mathbb{R}^{3}\backslash(\Sigma_{1}\cup\Sigma_{2})$ that has boundary $\partial N=\Sigma_{1}\cup\Sigma_{2}$ not connected.  {From the assumption}, the mean curvature of $\partial N$ is always nonnegative with respect to the outward normal. Moreover, $N$ is Ricci-flat  and thus, from \cite[Theorem 8]{MSY}, $N$ contains a plane $P$ since it is the only complete stable minimal surface in $\mathbb{R}^{3}$, see \cite{FCS}. Applying Theorem C, the only possibility for $P$ is a vertical plane. Now, applying theorem E, there exist two sequences of points $\{p_{n}\}\subset\Sigma_{1}$ and $\{q_ {m}\}\subset\Sigma_{2}$ such that $\eta_{\Sigma_{1}}(p_{n})\rightarrow 0$ and $\eta_{\Sigma_{2}}(q_{m})\rightarrow 0$ as $n,m\rightarrow +\infty$. Since each $\Sigma_{i}$ is mean convex and $\dot{\varphi}\neq 0$, each angle function reaches either its supremum or its infimum. Hence, the proof follows by arguing in the same way that the Corollary \ref{corEk} together with Proposition \ref{ekeland}.
\end{proof}

\section{Proof of Theorem E}

We follow similar ideas as given in \cite{KP}. Namely, suppose by contradiction that $\Sigma$ is contained in the cone $C_{\vec{e}_3,a}$. Consider the function $ \widetilde{\psi} : \Sigma \rightarrow \mathbb{R}$, given by $\widetilde{\psi}(x)=\mu(x) - a |x|$, where $\mu(x)=\langle x,e_{3}\rangle$ and $\{e_{i}\}_{i=1,2,3}$ is the canonical frame of $\mathbb{R}^3$.  Notice that $\widetilde{\psi}$ is smooth on $\Sigma$. Therefore, we have:
\begin{eqnarray}
\vert\nabla \widetilde{\psi}\vert&=&\vert e_3^{T} - a \nabla \vert x\vert\vert =\left| e_3^{T} - a \dfrac{x^{T}}{\vert x\vert}\right| \label{grad-thmC} \\
\Delta \widetilde{\psi} &=& \dot{\varphi} \eta^2 - \dfrac{a}{\vert x\vert} \left( \dot{\varphi} \eta \langle x , N \rangle + 2 - \dfrac{\vert x^{T}\vert^2}{\vert x\vert^2} \right) \label{lap-thmC} \\
\langle \nabla \varphi, \nabla \widetilde{\psi} \rangle &=& \dot{\varphi} \langle e_3^T, e_3^T\rangle - a \dfrac{x^T}{\vert x\vert}
\end{eqnarray}
Hence,
$$
\begin{array}{rcl}
\Delta^{\varphi} \widetilde{\psi} &=& - \dfrac{2a}{\vert x\vert} + a \dfrac{\vert x^T\vert^2}{\vert x\vert^3} + \dot{\varphi} - \dfrac{a \dot{\varphi}}{\vert x\vert} (\psi(x)+a\vert x\vert) \\
&\geq& - \dfrac{2a}{\vert x\vert} + \dot{\varphi} - \dfrac{a \dot{\varphi}\psi(x)}{\vert x\vert} - a^2 \dot{\varphi}
\end{array}
$$
and since $\widetilde{\psi}\leq 0$ by the hypothesis of contradiction, we have
\begin{align}
\Delta^{\varphi} \widetilde{\psi} &> - \dfrac{2a}{\vert x\vert} + \dot{\varphi}(1 - a^2) \nonumber\\
&= (1-a^2) \left( \dot{\varphi} - \dfrac{2a}{(1-a^2)\vert x\vert} \right). \label{inqc}
\end{align}
Now, notice that there exists $\psi^{*}=\text{sup}_{\Sigma}\widetilde{\psi}$ and then there exists a sequence of points $\{p_{n}\}_{n\in\mathbb{N}}\subset\Sigma$ such that
\begin{equation}
\label{estc}
\widetilde{\psi}(p_{n})\longrightarrow\psi^{*} \ \ \text{and} \ \ \frac{1}{n}\geq\Delta^{\varphi}\widetilde{\psi} \ \ \text{for any} \ \ n\in\mathbb{N}.
\end{equation} 
Up to subsequence (denoted in the same way), we can assume that the sign of $\mu(p_{n})$ is constant. Firstly, suppose that $\mu(p_{n})\geq 0$ for any $n\in\mathbb{N}$. Since $\widetilde{\psi} \leq 0$, we have $\mu(x) \leq a \vert x\vert$. Consequently, along with this sequence, we get
$$ - \dfrac{2a}{(1-a^2)\vert x\vert} \geq - \dfrac{2a}{(1-a^2)\mu(x)}$$ 
and
$$
\dot{\varphi}(\mu(x)) - \dfrac{2a}{(1-a^2)\vert x\vert} \geq \dot{\varphi}(\mu(x)) - \dfrac{2a}{(1-a^2)\mu(x)}.
$$
Before we continue, let us prove that we can take finite vertical translations of $\Sigma$. Let $\lambda>0$, and for the surface $\Sigma - \lambda e_3$, consider its corresponding mean curvature $H_\lambda$ and angle function $\eta_\lambda$. We know that
$$
\begin{array}{rcl}
H(p) &=& H_{\lambda} (p - \lambda e_3), \\
\eta (p) &=& \eta_{\lambda} (p - \lambda e_3).
\end{array}
$$
Furthermore, $\mu_{\lambda} (p - \lambda e_3) = \langle p - \lambda e_3, e_3 \rangle = \mu(p) - \lambda$. Since $H(p) = - \dot{\varphi}(\mu(p)) \eta (p)$, we have
$$H_{\lambda} (p - \lambda e_3) =- \dot{\varphi}(\mu_{\lambda} (p - \lambda e_3)+\lambda) \eta_{\lambda} (p-\lambda e_3).$$
Define $\varphi_{\lambda}(x_3):=\varphi(x_3 + \lambda)$ so that $\dot{\varphi_{\lambda}}(x_3)=\dot{\varphi}(x_3+\lambda)$. Therefore, 
$$\dot{\varphi} (\mu_{\lambda} (p+\lambda e_3)+\lambda)=\dot{\varphi_{\lambda}}(\mu_{\lambda}(p-\lambda e_3))$$
and we conclude that
$$H_{\lambda} (p - \lambda e_3) =- \dot{\varphi_{\lambda}}(\mu_{\lambda}(p-\lambda e_3)) \eta_{\lambda} (p-\lambda e_3).$$
Therefore, $\Sigma - \lambda e_3$ is a $[\varphi_\lambda, e_3]$-minimal surface, as we wanted.

Suppose that $\mu(p_{n})\longrightarrow\mu$ as $n\rightarrow+\infty$.  Up to vertical and horizontal translation, we can assume that $\mu>1$. Otherwise, if $\mu(p_{n})$ is divergent, then we get to a contradiction with \eqref{inqc}. Then, from \eqref{inqc} and \eqref{estc} together our assumptions, we get that
$$\frac{1}{n}\geq\Delta^{\varphi}\widetilde{\psi(}p_{n})\geq (1-a^2)\left( \xi-\frac{2a}{1-a^2}\right)>0.$$
Hence, we can ensure the existence of the limit of $\Delta^{\varphi}\widetilde{\psi} (p_{n})$, which leads to a contradiction.

On the other hand, suppose that $\mu(p_{n})\leq 0$ for any $n\in\mathbb{N}$. In this case, we will assume that $\mu(p_{n})\longrightarrow-\infty$ as $n\rightarrow+\infty$. Otherwise, if up to subsequence, $\mu(p_{n})$ has limit $\mu\leq 0$. Then, we can take a translated surface as before, where $\mu > 1$, and the previous reasoning applies. On the other hand, if $\mu(p_{n})\longrightarrow-\infty$, then $| p_{n}|\longrightarrow+\infty$ as $n\rightarrow+\infty$. Hence, taking the limit in \eqref{inqc}, we get that
$0\geq \xi>0,$
getting again a contradiction.
$\hfill\square$

\section{Concluding remarks}
We finish this work with the following related questions:
\begin{itemize}
\item Motivated by Theorems C, D, and the strong half-space theorem, we ask under what assumptions for $\varphi$ the only complete $[\varphi,\vec{e}_{3}]$-minimal surface contained in a half-space is the vertical plane.

\item Motivated by the work of N. Nadirashvili \cite{NN}, an interesting question would be to find constraints for $\varphi$ or conditions over the curvature to ensure the existence of complete $[\varphi,\vec{e}_{3}]$-minimal surfaces in a compact convex domain of $\mathbb{R}^3$.

\item Motivated by the works of Colding and Minicozzi \cite{CM2}, Meeks, P\'erez and Ros \cite{MPR}, and Meeks and Tinaglia \cite{MT}, we ask what conditions for $\Sigma$ and function $\varphi$ would imply properness in $\mathbb{R}^3.$

\end{itemize}

\section{Acknowledgments}

A. L. Martinez-Trivi\~no was partially supported by Ministerio de Ciencia e Innovaci\'on Grants No: PID2020-118137GB-I00/AEI/10.13039/501100011033, PID2021-126217NB-100, FQM-398: (GELORI) Lorentzian and Riemannian geometry and the ``Maria de Maeztu'' Excellence Unit IMAG, reference CEX2020-001105-M, funded by MCIN/AEI/10.13039/501100011033. J. P. dos Santos was partially supported FAPDF - Funda\c{c}\~ao de Apoio a Pesquisa do Distrito Federal, grant number 00193-00001678/2024-39, CNPq - Conselho Nacional de Desenvolvimento Cient\'ifico e Tecnol\'ogico, Brazil, grant number 402589/2022-0, and he is grateful to the Mathematics Department of King's College London for the hospitality, where part of this work was conducted.


\begin{thebibliography}{10}


 \bibitem{AMR} L. Al\'ias, P. Mastrolia, M. Rigoli.: \textit{Maximum Principles and Geometric Applications}. Springer Monographs in Mathematics. Springer, Cham, 2016.

\bibitem{TLL1} T. Bourni, M. Langford, G. Tinaglia. \textit{On the existence of translating solutions of mean curvature flow in slab regions}. Anal. PDE 13 (2020), no. 4, 1051--1072.

\bibitem{TLL2} T. Bourni, M. Langford, G. Tinaglia. \textit{Translating solutions to mean curvature flow. Minimal surfaces: integrable systems and visualisation}, 1--12, Springer Proc. Math. Stat., 349, Springer, Cham, (2021).

\bibitem{CM} F. Chini, N. M. Moller.: \textit{Bi-halfspace and convex hull theorems for translating solitons}. Int. Math. Res. Not. IMRN (2021), no. 17, 13011--13045.

\bibitem{CSS} J. Clutterbuck, O. Schn\"urer, F. Schulze.:\textit{Stability of translating solitons to mean curvature flow}. Calculus of Variations and Partial Differential Equations, 29(2007), no 3, 387-394.

\bibitem{CM2} T. H. Colding and W. P. Minicozzi II.:\textit{The Calabi-Yau conjectures for embedded surfaces.} Ann. of Math. 167 (2008), 211-243.

\bibitem{JD} J. Dodziuk.: \textit{Maximum principle for parabolic inequalities and the heat flow on open manifolds}. Ind ana Univ. Math. J. 32(1983), 703-716.

\bibitem{IR} D. Impera, M. Rimoldi.: \textit{Rigidity results and topology at infinity of translating solitons of the mean curvature flow}. Commun. Contemp. Math. 19 (2017), no 6, 1750002, 21.

\bibitem{D} U. Dierkes.:\textit{Singular Minimal Surfaces.} S. Hildebrandt et. al (eds.) Geometric Analysis and Nonlinear Partial Differential Equations, (2003) 177-193.


\bibitem{FCS} D. Fischer-Colbrie and R. Schoen:\textit{ The structure of complete stable minimal surfaces in 3-manifolds of nonnegative scalar curvature}, Comm. Pure Appl. Math. 33 (1980), 199-211.

\bibitem{HM} D. Hoffman, W. H. Meeks, II .:\textit{The strong halfspace theorem for minimal surfaces}. Invent. math 101 (1990), 373-377.

\bibitem{HIMW} D.Hoffman, T. Ilmanen, F. Mart\'in, B. White.: \textit{Graphical translators for mean curvature flow}. Calculus of Variations and PDEs. 58 (2019), art. 117.

\bibitem{HMW1} D. Hoffman, F. Mart\'in, B. White.:\textit{Annuloids and $\Delta$-wings}. Advanced Nonlinear Studies, vol. 24, no. 1, 2024, pp. 74-96. https://doi.org/10.1515/ans-2023-0111

\bibitem{Ilm94} T. Ilmanen:\textit{ Elliptic regularization and partial regularity for motion by mean curvature}. Men. Amer. Math. Soc, 108 No. 520 (1994).


\bibitem{KP} D. Kim, J. Pyo.:\textit{Half-space type theorem for translating solitons of the mean curvature flow in Euclidean space}. Proc. Amer. Math. Soc, 8 (2021) 1-10.

\bibitem{RL} R. L\'opez.\textit{Invariant singular minimal surfaces}. Ann. Glob. Anal. Geom. 53 (2018), 521-541.

\bibitem{mari} L. Mari, J. R. Oliveira, A. Savas-Halilaj, R. Sodr\'e de Sena. \textit{Conformal solitons for the mean curvature flow in hyperbolic space}. Ann Glob Anal Geom 65, 19 (2024).

\bibitem{RL1} R. L\'opez.\textit{What is the shape of a Cupola?}. Amer. Math. Monthly 130 (2023), no. 3, 222--238.

\bibitem{MM} A. Mart\'inez, A. L. Mart\'inez Trivi\ no.: \textit{Equilibrium of Surfaces in a Vertical Force Field}. Mediterr. J. Math. (2022) 19:3.

\bibitem{MM1} A. Mart\'inez, A. L. Mart\'inez Trivi\~no.: \textit{A Calabi's type correspondence}. Nonlinear Analysis. 191 (2020) 111637.

\bibitem{MM2} A. Mart\'inez, A. L. Mart\'inez Trivi\~no :\textit{A Weierstrass type representation for translating solitons and singular minimal surfaces}. J. Math. Anal. 516 (2022) 126528.

\bibitem{MM3} A. Mart\'inez, A. L. Mart\'inez Trivi\~no.: \textit{Calabi-Bernstein type results for critical points of a weighted area functional in $\mathbb{R}^{3}$ and $\mathbb{L}^{3}$}. J. Math. Anal. Appl. 538 (2024), no. 2, Paper No. 128372, 12 pp.

\bibitem{MMJ}A. Mart\'inez, A. L. Mart\'inez Trivi\~no, J. P. dos Santos.:\textit{Mean convex properly embedded $[\varphi,\vec{e}_{3}]$-minimal surfaces in $\mathbb{R}^{3}$}. Rev. Mat. Iberoam. (2022) 

\bibitem{MJ} A. L. Mart\'inez Trivi\~no, J. P. dos Santos.:\textit{Uniqueness of the $[\varphi,\vec{e}_{3}]$-catenary cylinders by their asymptotic behavior}. J.Math. Anal. Appl. 514 (2022) 126347.

\bibitem{MSY} W. Meeks III, L. Simon, S. T. Yau.:\textit{Embedded minimal surfaces, exotic spheres, and manifolds with positive Ricci curvature}. Annals of Mathematics, 116 (1982), 621-659.

\bibitem{MPR} W. Meeks III, J. P\'erez, A. Ros.:\textit{The embedded Calabi-Yau conjecture for finite genus}. Duke Math. J. 170 (13) 2891-2956 (2021).

\bibitem{MT} W. Meeks III, G. Tinaglia.:\textit{Chord arc properties for constant mean curvature disks}. Geometry \& Topology 22 (1), 305-322 (2018).

\bibitem{NN} N. Nadirashvili.:\textit{Hadamard's and Calabi-Yau's conjectures on negatively curved and minimal surfaces}. Invent. Math. 126 (1996), no. 3, 457-465.

\bibitem{PRS} S. Pigoli, M. Rigoli, A. G. Setti.:\textit{Maximum principles at infinity on Riemannian manifolds: an overview}. Workshop on Differential Geometry (Portuguese). Mat. Contemp. 31 (2006), 81--128.

\bibitem{Poisson} S. D. Poisson.:\textit{Sur les surfaces elastique.} \ Men. \ CL. \ Sci. \ Math. \ Phys. \  nst. \ France, deux, 167-225 (1975).

\bibitem{Sh} L. Shahriyari.:\textit{Translating graphs by mean curvature flow}. Geom. Dedicata 175(2015), 57-64.

\bibitem{SX}J. Spruck, L. Xiao:  \textit{Complete translating solitons to the mean curvature flow in $\mathbb{R}^3$ with nonnegative mean curvature}, Amer  J. Math. 142 (2020), no. 3, 993--1015. 

\bibitem{Wang} X. Wang.:\textit{ Convex solutions to the mean curvature flow}, Annals of Mathematics, 173 (2011), 1185-1239.

\bibitem{X} Y. L. Xin.:\textit{Translating solitons of the mean curvature flow.}. Calc. Var. Partial Differential Equations 54 (2015), no. 2, 1995--2016.


\end{thebibliography}
\end{document}